\newdimen\figrasterwd
\theoremstyle{plain}
\newtheorem{theorem}{Theorem}[section]
\newtheorem{corollary}[theorem]{Corollary}
\newtheorem{proposition}[theorem]{Proposition}
\theoremstyle{definition}
\newtheorem{assumption}{Assumption}
\theoremstyle{remark}
\newtheorem{remark}{Remark}
\begin{document}

\articletype{ARTICLE}

\title{Penalty decomposition derivative free method for  the minimization of partially separable functions over a convex feasible set}

\author{
\name{Francesco Cecere\textsuperscript{a}, Matteo Lapucci\textsuperscript{b}\thanks{CONTACT Davide Pucci. Email: davide.pucci@unifi.it}, Davide Pucci\textsuperscript{b},  Marco Sciandrone\textsuperscript{a}}
\affil{\textsuperscript{a}Dipartimento di Ingegneria Informatica, Automatica e Gestionale ``Antonio Ruberti'', Università di Roma ``La Sapienza'', Via Ariosto, 25, 00185, Roma, Italy; \textsuperscript{b}Dipartimento di Ingegneria dell'Informazione, Università di Firenze, Via di S.\ Marta, 3, 50135, Firenze, Italy}
}

\maketitle
\begin{abstract}
In this paper, we consider the problem of minimizing a smooth function, given as finite sum of black-box functions, over a convex set. In order to advantageously exploit the structure of the problem,
for instance when the terms of the objective functions are partially separable, noisy, costly or with first-order information partially accessible, we propose a framework where the penalty decomposition approach is combined with a derivative-free line-search-based method. Under standard assumptions, we state theoretical results showing that the proposed algorithm is well-defined and globally convergent to stationary points. The results of preliminary numerical experiments, performed on test problems with number of variables up to thousands, show the validity of the proposed method w.r.t. state of the art methods.
\end{abstract}

\begin{keywords}
Penalty decomposition, Finite-sum, Coordinate partially separable, Derivative-free line-search 
\end{keywords}

\begin{amscode}
	90C56, 90C30, 90C26 
\end{amscode}
\section{Introduction}\label{sec1}
In this work, we are interested in finite-sum optimization problems of the form
\begin{equation}\label{prob_main}
 \begin{aligned}
     \min_x\;& f(x)=\displaystyle{\sum_{j=1}^m}f_j(x)\\
  \text{s.t. }& x\in X
 \end{aligned}
\end{equation}
where $X\subset \mathbb{R}^n$ is a nonempty, closed, convex set and $f_j:\mathbb{R}^n\to \mathbb{R}$, for $j=1,\ldots ,m$ are smooth functions for which, however, we do not have access to first-order information. We also assume that $f$ is bounded below by some value $f^*$ and that the euclidean projection operator onto $X$ is effectively available. The latter assumption would in principle hold for any convex set; yet, while projection can be exactly and cheaply obtained for important constraints sets such as bounds or spheres, in other cases it is impractical to handle.

The literature on derivative-free methods is wide and increasing. For insights on general derivative-free approaches we refer the reader to the books \cite{conn2009introduction,Audet2017}, the survey \cite{larson2019derivative} and to some seminal papers both for smooth optimization (e.g., \cite{de1984stopping,torczon1,torczon2,powell}) and nonsmooth optimization (e.g., \cite{audet2008,lucidi1,lucidi2019}). A very recent frontier (investigated in \cite{bollapragada2025derivative} and references therein) is that of derivative-free stochastic optimization, where the black-box provides evaluations of a stochastic objective function.

According to the classification of derivative-free methods stated in \cite{larson2019derivative}, we set our work within the class of {\it direct search methods}, i.e., methods that use only function values
without approximating the gradient. This specific class of method is also the subject of the thorough survey \cite{dzahini2025direct}.
More specifically, we deal with deterministic objective functions, and
the proposed derivative-free approach fits that of the so-called {\it directional direct-search methods}, where the current iterate $x_k$  is updated by a term of the form $\alpha_k d_k$, where $\alpha_k$ is the step size and $d_k$ is a vector taken from a finite set of search directions $D_k$ that surely includes a feasible descent direction, so that
(sufficiently) small step sizes along such direction lead to an improved feasible solution. Most directional direct-search methods share the choice of the set of search directions $D_k$ and differ in the strategies to sample the objective
function along them. We adopt the line-search strategy, i.e., a new point is
produced by performing a derivative free line-search that guarantees a sufficient decrease of the objective
function.

For strongly structured problems, like \eqref{prob_main}, tailored derivative-free methods, have been presented; for instance, in \cite{cristofari2021derivative} authors exploit the peculiarities of the feasible set (being the convex hull of a given set of atoms) of the problem; partial knowledge of first order information is taken into account in \cite{liuzzi2012decomposition}; the partially separable structure of the objective function is addressed in \cite{PorcelliToint2022Exploiting}.

Drawing inspiration from the latter work, we have considered \eqref{prob_main}, noting that its structure may deserve attention in designing derivative-free algorithms, more in general, in the following cases:
\begin{enumerate}[(i)]
 \item the objective function is {\it coordinate partially separable} (CPS), i.e., each function $f_j$ only depends on a (relatively small) subset of variables $S_j\subseteq\{1,\ldots,n\}$;
 \item  the evaluation of some (or all) terms of the objective function may be costly;
 \item  the functions $f_j$ may be affected by noise differently as result of different simulations or measurements;
 \item  the objective function is {\it partially derivative-free}, i.e., for some functions $f_j$ the gradient is available.
 \end{enumerate}
 In order to take into account the structure of problem \eqref{prob_main} related to the possible cases (i)-(iv),
 we adopt a {\it penalty decomposition} approach
 that allows to manage cases (i)-(iv) in a unified way.

 Briefly, the penalty decomposition approach consists in the introduction of an auxiliary vector of variables - copies in principle of the original ones - and the addition of a (sequential) penalty term to the objective function, penalizing the distance between each pair of corresponding variables.
 The introduction of the auxiliary variables allows the decomposition of the problem into smaller independent subproblems; this
may be advantageously exploited to take into account the peculiarities of problem \eqref{prob_main} in the cases (i)-(iv) discussed above.

The penalty decomposition approach is related to that of Alternating Direction Method of Multipliers (ADMM)  (see, e.g., \cite{neal2011distributed}), which could be used to solve the reformulated problem in place of the sequential penalty method. ADMM methods come with a strong convergence theory and perform well in practice even in some nonconvex cases, but require to compute the global minimizers of augmented Lagrangian functions. Therefore, the development and application of ADMM-based methods may be prohibitive in a general nonconvex and derivative-free setting as that of the present work where, to take into account these issues, we adopt the sequential penalty approach.

In recent years, various structured problems have been handled by a penalty decomposition strategy; for instance, penalty decomposition allows for an efficient management of sparsity constraints \cite{lu2013sparse}  and more generally of geometric constraints \cite{kanzow2023inexact}, or to manage the intersection of convex sets \cite{galvan2020alternating}. In all of these cases, the availability of first order information was assumed.
To our knowledge, derivative-free penalty decomposition methods have not yet been studied,  except for the case of \cite{lapucci2021convergent}, where a method for sparse optimization has been designed to minimize a smooth black-box function to a cardinality constraint.

In this work we define a general algorithm model where the penalty decomposition strategy is coupled with a derivative-free line-search-based method for the management of subproblems.
We state theoretical results - not immediately descending from known results in the literature - for proving global convergence properties of the proposed algorithm and we present its characterization in the specific case of partially separable functions. Finally, we report the results of preliminary computational experiments.

The paper is organized as follows: Section \ref{sec2} concerns the penalty decomposition reformulation of problem \eqref{prob_main}, with the convergence analysis of the sequential scheme discussed in Section \ref{sec21}; the inner optimization scheme is presented in \ref{sec3} and formally analyzed in \ref{sec31}, whereas the discussion on the stopping condition is provided in \ref{sec32}; in Section \ref{sec4} we address the particular case of CPS problems; numerical experiments are described and discussed in Section \ref{sec5}; we finally give concluding remarks in Section \ref{sec6}.

\subsubsection*{Notation}
We denote by $\Pi_X$ the Euclidean projection operator onto the convex set $X$. A point $x\in X$ is stationary for problem \eqref{prob_main} if $x=\Pi_X[x-\nabla f(x)]$. We denote by $e_i$ the $i$-th element of the canonical basis in $\mathbb{R}^n$. If $S\subseteq\{1,\ldots,n\}$ is an index set, we denote by $x_S$ the subvector of variables of $x$ indexed by $S$.

\section{A Penalty decomposition approach}\label{sec2}
To describe the methodology proposed in this paper and also for subsequently carrying out its convergence analysis, the following problem, that allows for a much simpler notation, can be considered without loss of generality:  
\begin{equation}\label{prob_main_3}
 \begin{aligned}
        \min_{x}\;& f(x)=f_1(x)+f_2(x)\\
  \text{s.t. }&x\in X.
\end{aligned}
\end{equation}

\noindent Following a well-established approach \cite{lu2013sparse,galvan2020alternating,lapucci2021convergent,kanzow2023inexact}, the problem can be reformulated as 
\begin{equation}
    \label{prob:vincolato}
    \begin{aligned}
        \min_{x,y,z}\;&f_1(y)+f_2(z)\\\text{s.t. }&x\in X,\\&x-y=0,\\&x-z=0,
    \end{aligned}
\end{equation}
which is clearly equivalent to \eqref{prob_main_3}. Problem \eqref{prob:vincolato} then allows to introduce the \textit{penalty function}
$$P_\tau(x,y,z)=f_1(y)+f_2(z)+ \frac{\tau}{2} \left(\Vert x-y \Vert^2+\Vert x-z \Vert^2 \right),$$
whose (partial) gradients are given by
\begin{equation}
\label{b11}
    \begin{aligned}
        \nabla_x P_\tau (x, y,z) &= \tau \left(x-y\right)+\tau \left(x-z\right),\\
\nabla_y P_\tau (x, y,z,w) &= \nabla_x f _1(y ) - \tau \left(x-y\right),\\
\nabla_z P_\tau (x, y,z,w) &= \nabla_x f _2(z ) - \tau \left(x-z\right).
    \end{aligned}
\end{equation}
A sequential penalty method \cite[Ch.\ 4.2]{bertsekas1999nonlinear} can then be employed to address problem \eqref{prob:vincolato} and thus \eqref{prob_main_3}, which iteratively solves (inexactly) subproblems of the form
\begin{equation*}
\label{prob:subprob}
\begin{aligned}
    \min_{x, y,z \in \mathbb{R}^n}\;&  P_{\tau_k}(x,y,z)\\
    &\text{s.t. }x\in X,    
\end{aligned}
\end{equation*}
for a sequence of values $\{\tau_k\}$ such that $\tau_k\to\infty$. We shall highlight here that, at this stage, the inner solver that tackles the subproblems is an unspecified oracle, that could possibly employ first-order (ore even further) information. The black-box nature of the proposed overall scheme will be in the following sections.

The subproblem is considered (approximately) solved if a solution $(x^k,y^k,z^k)$ is found such that
\begin{align}
    \Vert \nabla_{y}P_{\tau_k}(x^k,y^k,z^k) \Vert &\le c_y \xi_k, \label{c11}\\
    \Vert \nabla_{z}P_{\tau_k}(x^k,y^k,z^k) \Vert &\le c_z\xi_k, \label{c12}\\
    \Vert x_k-\Pi_X[x_k-\nabla_{x}P_{\tau_k}(x^k,y^k,z^k)] \Vert &\le \xi_k, \label{c3}
\end{align}
for some positive tolerance $\xi_k>0$ and constants $c_y,c_z>0$.
The resulting algorithm is formally described by Algorithm \ref{alg:spm}. Note that, formally, the solution $(x^{k-1},y^{k-1},z^{k-1})$ is not used during iteration $k$; however, it is well-known that, with sequential penalty type methods, using the solution of the previous subproblem as a starting point for the next one is often good for efficiency and stability reasons \cite[Ch.\ 4.2]{bertsekas1999nonlinear}. We shall also observe that, in addition to the approximate optimality conditions, a further boundedness property is required for solution $(x^k,y^k,z^k)$.
\begin{remark}
    Note that, for any $\tau>0$ and any feasible $(x,y,z)$, we have
$$
P_\tau(x,y,z)=f(x).
$$
Therefore, if we start with a solution $(x^0,y^0,z^0)$ such that $x^0\in X$ and $y^0=z^0=x^0$, condition $P_{\tau_k}(x^k,y^k,z^k) \le P_{\tau_0}(x^0,y^0,z^0)$ can certainly be satisfied if at iteration $k$ we generate $(x^k,y^k,z^k)$ by means of a monotone descent method starting at
$$\begin{cases}
    (x^{k-1},y^{k-1},z^{k-1})&\text{if }P_{\tau_k}(x^{k-1},y^{k-1},z^{k-1})\le f(x^0),\\(x^0,y^0,z^0)&\text{otherwise}.
\end{cases}$$
\end{remark}
\noindent We now turn to the convergence analysis of the sequential penalty framework. 

\begin{algorithm}[h]
	\caption{\texttt{Sequential Penalty Approach}}
	\label{alg:spm}
	\textbf{Input:}  $x^0, y^0, z^0 \in \mathbb{R}^n,$ $\{\xi_k\}\subseteq \mathbb{R}^+, \{\tau_k\}\subseteq \mathbb{R}^+$, $c_y,c_z>0$\\
    \For{$k=1,2,\ldots$}{
    Find $(x^{k},y^{k},z^{k})\in X\times \mathbb{R}^n\times \mathbb{R}^n$ s.t.\ 
    \begin{gather*}
        P_{\tau_k}(x^k,y^k,z^k) \le P_{\tau_0}(x^0,y^0,z^0),\\
        \Vert \nabla_{y}P_{\tau_k}(x^k,y^k,z^k) \Vert \le c_y \xi_k,\\ \Vert \nabla_{z}P_{\tau_k}(x^k,y^k,z^k) \Vert \le c_z \xi_k,\\
        \Vert x_k-\Pi_X[x_k-\nabla_{x}P_{\tau_k}(x^k,y^k,z^k)] \Vert \le \xi_k.
    \end{gather*}
}
\end{algorithm}

\subsection{Convergence analysis}
\label{sec21}
We start the analysis stating the following assumption (formally given for the case of $f$ defined as the sum of $m$ terms).
\begin{assumption}
\label{ass:objective_function}
Functions $f_j:\mathbb{R}^n\to \mathbb{R}$, $j=1,\ldots, m$, are coercive.
\end{assumption}
We now begin stating the formal properties of the algorithm with a preliminary result.
\begin{proposition}
    \label{prop:pf_coercive}
    For any $\tau>0$, the penalty function $P_\tau(x,y,z)$ is coercive, i.e., $P_\tau(x^t,y^t,z^t)\to+\infty$ if $\|(x^t,y^t,z^t)\|\to\infty$.
\end{proposition}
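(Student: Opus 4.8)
The plan is to show coercivity of $P_\tau$ by arguing that if $\|(x^t,y^t,z^t)\|\to\infty$, then $P_\tau(x^t,y^t,z^t)\to+\infty$, splitting into cases according to which block of the triple is responsible for the norm blowing up. The key observation is that $P_\tau$ consists of the terms $f_1(y)+f_2(z)$, which are coercive in $y$ and $z$ respectively by Assumption \ref{ass:objective_function}, plus the quadratic penalty $\frac{\tau}{2}(\|x-y\|^2+\|x-z\|^2)$, which is nonnegative and grows when $x$ is far from $y$ or $z$.

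The main steps would be as follows. First I would take an arbitrary sequence $(x^t,y^t,z^t)$ with $\|(x^t,y^t,z^t)\|\to\infty$ and, passing to a subsequence, distinguish cases on whether $\{y^t\}$ is bounded, $\{z^t\}$ is bounded, or both. If $\{y^t\}$ is unbounded, then along a further subsequence $\|y^t\|\to\infty$, so $f_1(y^t)\to+\infty$ by coercivity of $f_1$; since $f_2$ is coercive it is bounded below (a coercive continuous function attains its minimum), and the penalty term is nonnegative, so $P_\tau(x^t,y^t,z^t)\to+\infty$. The symmetric argument handles the case $\{z^t\}$ unbounded. The remaining case is that both $\{y^t\}$ and $\{z^t\}$ are bounded; then necessarily $\|x^t\|\to\infty$ (otherwise the whole triple would be bounded), and since $\{y^t\}$ is bounded we get $\|x^t-y^t\|\to\infty$, hence $\frac{\tau}{2}\|x^t-y^t\|^2\to+\infty$, while $f_1(y^t)$ and $f_2(z^t)$ remain bounded below (again by coercivity implying bounded below on bounded sets, or even globally) and the other penalty term is nonnegative; thus $P_\tau(x^t,y^t,z^t)\to+\infty$.

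I would state explicitly the small auxiliary fact used throughout: a coercive continuous function $\mathbb{R}^n\to\mathbb{R}$ is bounded below (indeed it attains a global minimum), so in particular there exist $\ell_1,\ell_2\in\mathbb{R}$ with $f_1\ge\ell_1$ and $f_2\ge\ell_2$ everywhere. With this in hand, in every case one of the four summands making up $P_\tau$ tends to $+\infty$ while the other three are bounded below by fixed constants, which gives the conclusion. The argument extends verbatim to the $m$-term penalty function $P_\tau(x,y^{(1)},\dots,y^{(m)})=\sum_{j=1}^m f_j(y^{(j)})+\frac{\tau}{2}\sum_{j=1}^m\|x-y^{(j)}\|^2$.

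There is no real obstacle here; the only point requiring a little care is the bookkeeping in the "both $\{y^t\}$ and $\{z^t\}$ bounded" case, to make sure that $\|(x^t,y^t,z^t)\|\to\infty$ genuinely forces $\|x^t\|\to\infty$ and hence $\|x^t-y^t\|\to\infty$ — this uses that a sum of bounded sequences stays bounded, so if $x^t$ were bounded the triple would be bounded, contradicting the hypothesis. I would also note that passing to subsequences is legitimate because to prove $P_\tau(x^t,y^t,z^t)\to+\infty$ it suffices to show every subsequence has a further subsequence along which this holds.
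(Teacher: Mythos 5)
Your proposal is correct and follows essentially the same route as the paper's proof: split according to whether $\{y^t\}$ or $\{z^t\}$ is unbounded (using coercivity of $f_1+f_2$ and nonnegativity of the penalty) or both are bounded (forcing $\|x^t\|\to\infty$ and letting the quadratic penalty term blow up while the function values stay bounded below). Your version merely makes explicit the auxiliary facts (coercive continuous functions are bounded below; the subsequence reduction) that the paper leaves implicit.
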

\begin{proof}
    By Assumption \ref{ass:objective_function}, function $f(y,z)=f_1(y)+f_2(z)$ is also coercive. Assume $\{(x^t,y^t,z^t)\}$ is such that $\|(x^t,y^t,z^t)\|\to\infty$. If either $\|y^t\|\to\infty$ or $\|z^t\|\to\infty$, by the coercivity of $f(x,y)$, we get that 
    \begin{align*}
        \lim_{t\to\infty}P_\tau(x^t,y^t,z^t)&\ge \lim_{t\to\infty}f(y^t,z^t)=+\infty.
    \end{align*}
    On the other hand, if $\{(y^t,z^t)\}$ is bounded, it must be $\|x^t\|\to\infty$. Then, we have
    \begin{align*}
        \lim_{t\to\infty}P_\tau(x^t,y^t,z^t)&\ge \lim_{t\to\infty}f^*+\frac{\tau}{2}(\|y^t-x^t\|^2+\|z^t-x^t\|^2)=+\infty.
    \end{align*}
\end{proof}
\begin{corollary}
    \label{compactness}
Let $\bar{x}\in X$, $\bar{y},\bar{z}\in \mathbb{R}^{n}$, $\tau >0$.
The level set
$$
\mathcal{L}=\{(x,y,z) \in X \times \mathbb{R}^{n}\times \mathbb{R}^{n} \mid P_{\tau}(x,y,z) \le P_{\tau}(\bar{x},\bar{y},\bar{z})\}
$$
is compact.
\end{corollary}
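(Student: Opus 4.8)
The plan is to invoke the Heine--Borel theorem: since $\mathcal{L}\subseteq \mathbb{R}^{3n}$, it suffices to prove that $\mathcal{L}$ is closed and bounded. The boundedness will come directly from Proposition~\ref{prop:pf_coercive}, and closedness from the continuity of $P_\tau$ together with the closedness of $X$.

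For boundedness, I would argue by contradiction. Suppose $\mathcal{L}$ is unbounded; then there is a sequence $\{(x^t,y^t,z^t)\}\subseteq\mathcal{L}$ with $\|(x^t,y^t,z^t)\|\to\infty$. By the coercivity of $P_\tau$ established in Proposition~\ref{prop:pf_coercive}, we would have $P_\tau(x^t,y^t,z^t)\to+\infty$, contradicting the defining inequality $P_\tau(x^t,y^t,z^t)\le P_\tau(\bar x,\bar y,\bar z)<+\infty$ valid for all $t$. Hence $\mathcal{L}$ is bounded.

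For closedness, I would write $\mathcal{L}=(X\times\mathbb{R}^n\times\mathbb{R}^n)\cap\{(x,y,z)\mid P_\tau(x,y,z)\le P_\tau(\bar x,\bar y,\bar z)\}$. The set $X$ is closed by assumption, hence so is the product $X\times\mathbb{R}^n\times\mathbb{R}^n$; moreover $P_\tau$ is continuous, being the sum of the (smooth, hence continuous) functions $f_1,f_2$ and a quadratic term, so the sublevel set $\{(x,y,z)\mid P_\tau(x,y,z)\le P_\tau(\bar x,\bar y,\bar z)\}$ is closed. As an intersection of two closed sets, $\mathcal{L}$ is closed, and therefore compact.

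I do not expect a genuine obstacle here; the argument is essentially routine once Proposition~\ref{prop:pf_coercive} is in hand. The only point deserving a little care is that $\mathcal{L}$ is a subset of $X\times\mathbb{R}^n\times\mathbb{R}^n$ rather than of the whole of $\mathbb{R}^{3n}$, so the closedness step genuinely relies on the standing hypothesis that $X$ is closed; the substantive ingredient, namely the coercivity of $P_\tau$, has already been proved.
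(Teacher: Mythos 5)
Your proof is correct and matches the paper's intent: the paper states this as an immediate corollary of Proposition~\ref{prop:pf_coercive} and omits the proof, and the argument you give (coercivity $\Rightarrow$ boundedness, continuity of $P_\tau$ plus closedness of $X$ $\Rightarrow$ closedness, then Heine--Borel) is exactly the routine reasoning being invoked.
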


We are now able to state a first property concerning the sequence $\{(x^k,y^k,z^k)\}$.
\begin{proposition}\label{boundedness}
Let $\{(x^k,y^k,z^k)\}$ be a sequence satisfying $P_{\tau_k}(x^k,y^k,z^k) \le P_{\tau_0}(x^0,y^0,z^0)$ for all $k$.
Then $\{(x^k,y^k,z^k)\}$ is a bounded sequence.
\end{proposition}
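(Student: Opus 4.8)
The plan is to deduce the claim from the compactness of a \emph{fixed} level set, namely Corollary \ref{compactness}, after neutralizing the fact that the parameter $\tau_k$ varies with $k$. The point to exploit is that the penalty term $\frac{\tau}{2}(\|x-y\|^2+\|x-z\|^2)$ is nonnegative and nondecreasing in $\tau$, so that $P_\tau$ is monotone in $\tau$ at every fixed point $(x,y,z)$; combined with the standard assumption that $\{\tau_k\}$ is nondecreasing (hence $\tau_k\ge\tau_0$ for all $k$, which is consistent with $\tau_k\to\infty$), this lets one compare everything against the single parameter $\tau_0$.

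Concretely, I would first record the elementary identity, valid for $0<\tau\le\tau'$ and any $(x,y,z)$,
$$P_{\tau'}(x,y,z)-P_{\tau}(x,y,z)=\frac{\tau'-\tau}{2}\left(\|x-y\|^2+\|x-z\|^2\right)\ge 0 .$$
Applying it with $\tau=\tau_0$, $\tau'=\tau_k$ at the point $(x^k,y^k,z^k)$ and chaining with the hypothesis $P_{\tau_k}(x^k,y^k,z^k)\le P_{\tau_0}(x^0,y^0,z^0)$ gives
$$P_{\tau_0}(x^k,y^k,z^k)\;\le\;P_{\tau_k}(x^k,y^k,z^k)\;\le\;P_{\tau_0}(x^0,y^0,z^0)\qquad\text{for every }k .$$
Thus every element of the sequence belongs to the level set $\mathcal{L}=\{(x,y,z)\in X\times\mathbb{R}^n\times\mathbb{R}^n\mid P_{\tau_0}(x,y,z)\le P_{\tau_0}(x^0,y^0,z^0)\}$, which is compact by Corollary \ref{compactness} (taking $\bar x=x^0$, $\bar y=y^0$, $\bar z=z^0$, $\tau=\tau_0$). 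Since any subset of a compact set is bounded, $\{(x^k,y^k,z^k)\}$ is bounded.

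There is essentially no hard step: coercivity does all the work, but only indirectly, through Proposition \ref{prop:pf_coercive} and Corollary \ref{compactness}, which are already established. The only point requiring a word of justification is the inequality $\tau_k\ge\tau_0$; if one prefers not to assume $\{\tau_k\}$ nondecreasing, it suffices to set $\underline{\tau}:=\inf_k\tau_k>0$ and run the same comparison with $\underline{\tau}$ in place of $\tau_0$, invoking Corollary \ref{compactness} with $\tau=\underline{\tau}$.
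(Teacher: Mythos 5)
Your proof is correct and follows essentially the same route as the paper: both establish $P_{\tau_0}(x^k,y^k,z^k)\le P_{\tau_k}(x^k,y^k,z^k)\le P_{\tau_0}(x^0,y^0,z^0)$ via the monotonicity of $P_\tau$ in $\tau$ (using $\tau_0\le\tau_k$) and then invoke the compactness of the level set from Corollary \ref{compactness}. Your explicit identity for $P_{\tau'}-P_{\tau}$ and the fallback to $\underline{\tau}=\inf_k\tau_k$ are harmless elaborations of the same argument.
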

\begin{proof}
By Corollary \ref{compactness} we have that the set
$$
\mathcal{L}_0=\{(x,y,z) \in X \times \mathbb{R}^{n}
\times \mathbb{R}^{n}\mid  P_{\tau_0}(x,y,z) \le P_{\tau_0}(x^0,y^0,z^0)\}
$$
is compact.
The instructions of the algorithm imply that
$$
P_{\tau_0}(x^k,y^k,z^k)\le P_{\tau_k}(x^k,y^k,z^k)\le P_{\tau_0}(x^0,y^0,z^0),
$$
where the first inequality follows from the fact that $\tau_0\le \tau_k$.
Therefore, the  points of the sequence $\{(x^k,y^k,z^k)\}$ belong the compact set $\mathcal{L}_0$ and this concludes the proof.
\end{proof}

We finally state the main convergence result for Algorithm \ref{alg:spm}.
\begin{proposition}\label{prop_main}
    Let $\{(x^k,y^k,z^k)\}$ be the sequence generated by Algorithm \ref{alg:spm} with
    $\xi_k\to 0$ and $\tau_k\to\infty$.
    Then, $\{(x^k,y^k,z^k)\}$ admits limit points and every limit point $(x^*,y^*,z^*)$ is feasible, i.e., $x^*=y^*=z^*$, and $x^*$ is a stationary point of problem \eqref{prob_main_3}.
\end{proposition}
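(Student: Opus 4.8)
The plan is to combine the compactness result (Proposition~\ref{boundedness}) with a standard sequential-penalty limiting argument. First I would invoke Proposition~\ref{boundedness}: since $P_{\tau_k}(x^k,y^k,z^k)\le P_{\tau_0}(x^0,y^0,z^0)$ for all $k$, the sequence $\{(x^k,y^k,z^k)\}$ lies in the compact set $\mathcal{L}_0$, hence it is bounded and admits limit points. Fix a subsequence $K$ with $(x^k,y^k,z^k)\to(x^*,y^*,z^*)$ along $k\in K$; note $x^*\in X$ since $X$ is closed.

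\textbf{Feasibility.} The key is to show $x^*=y^*=z^*$. I would argue by contradiction: suppose $\|x^*-y^*\|^2+\|x^*-z^*\|^2=:\delta>0$. From the descent bound, $f_1(y^k)+f_2(z^k)+\tfrac{\tau_k}{2}(\|x^k-y^k\|^2+\|x^k-z^k\|^2)\le P_{\tau_0}(x^0,y^0,z^0)$ for all $k$. Since each $f_j$ is continuous and $\{(y^k,z^k)\}$ is bounded, the terms $f_1(y^k)+f_2(z^k)$ are bounded (say below by some $\hat f$), so $\tfrac{\tau_k}{2}(\|x^k-y^k\|^2+\|x^k-z^k\|^2)\le P_{\tau_0}(x^0,y^0,z^0)-\hat f$; letting $k\to\infty$ along $K$ with $\tau_k\to\infty$ forces $\|x^k-y^k\|^2+\|x^k-z^k\|^2\to 0$, contradicting $\delta>0$. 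Hence $x^*=y^*=z^*$, and in particular every limit point is feasible for~\eqref{prob:vincolato}; writing $x^*$ for the common value, $x^*\in X$.

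\textbf{Stationarity.} It remains to show $x^*=\Pi_X[x^*-\nabla f(x^*)]$. From \eqref{b11}, $\nabla_x P_{\tau_k}(x^k,y^k,z^k)=\tau_k(x^k-y^k)+\tau_k(x^k-z^k)$, while from \eqref{c11}--\eqref{c12}, $\|\nabla_x f_1(y^k)-\tau_k(x^k-y^k)\|\le c_y\xi_k$ and $\|\nabla_x f_2(z^k)-\tau_k(x^k-z^k)\|\le c_z\xi_k$. Adding these, $\|\nabla f_1(y^k)+\nabla f_2(z^k)-\nabla_x P_{\tau_k}(x^k,y^k,z^k)\|\le (c_y+c_z)\xi_k\to 0$. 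Since $y^k\to x^*$, $z^k\to x^*$ along $K$ and $\nabla f_1,\nabla f_2$ are continuous, $\nabla f_1(y^k)+\nabla f_2(z^k)\to\nabla f_1(x^*)+\nabla f_2(x^*)=\nabla f(x^*)$, hence $\nabla_x P_{\tau_k}(x^k,y^k,z^k)\to\nabla f(x^*)$ along $K$. Now use the nonexpansiveness (and thus continuity) of $\Pi_X$: condition~\eqref{c3} gives $\|x^k-\Pi_X[x^k-\nabla_x P_{\tau_k}(x^k,y^k,z^k)]\|\le\xi_k\to 0$, and passing to the limit along $K$ yields $\|x^*-\Pi_X[x^*-\nabla f(x^*)]\|=0$, i.e.\ $x^*$ is stationary for~\eqref{prob_main_3}.

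\textbf{Main obstacle.} The routine parts are the limiting arguments; the one point requiring a little care is the passage to the limit in~\eqref{c3}, which needs both continuity of $\Pi_X$ (from $1$-Lipschitzness) and the already-established convergence $\nabla_x P_{\tau_k}(x^k,y^k,z^k)\to\nabla f(x^*)$, the latter resting crucially on feasibility ($y^k,z^k\to x^*$) so that the gradient terms can be evaluated at a common limit. The feasibility step itself is where the coercivity Assumption~\ref{ass:objective_function} (via boundedness of $\{(y^k,z^k)\}$) is essential, since it is what keeps $f_1(y^k)+f_2(z^k)$ bounded while $\tau_k\to\infty$.
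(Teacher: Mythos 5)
Your proof is correct, and its overall skeleton (boundedness via Proposition~\ref{boundedness} $\Rightarrow$ limit points; then feasibility; then stationarity via \eqref{c3} and continuity of $\Pi_X$) matches the paper's; the stationarity step in particular is the paper's argument modulo notation, since the paper's multipliers $\mu_y^k=\tau_k(x^k-y^k)$, $\mu_z^k=\tau_k(x^k-z^k)$ are exactly the quantities whose sum you show converges to $\nabla f(x^*)$. The one place where you take a genuinely different route is feasibility. The paper deduces $x^*=y^*=z^*$ from conditions \eqref{c11}--\eqref{c12}: since $\mu_y^k=\nabla f_1(y^k)-\nabla_y P_{\tau_k}(x^k,y^k,z^k)$ converges (to $\nabla f_1(y^*)$) and is therefore bounded, dividing \eqref{vinc} by $\tau_k\to\infty$ gives $x^k-y^k\to 0$ directly. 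You instead reuse the level-set condition $P_{\tau_k}(x^k,y^k,z^k)\le P_{\tau_0}(x^0,y^0,z^0)$: with $f_1(y^k)+f_2(z^k)$ bounded below on the bounded sequence, the penalty term $\tfrac{\tau_k}{2}(\|x^k-y^k\|^2+\|x^k-z^k\|^2)$ is uniformly bounded, so it must vanish as $\tau_k\to\infty$. This is the classical quadratic-penalty argument; it is slightly more elementary and does not invoke the inner stationarity tolerances \eqref{c11}--\eqref{c12} for this step (it even shows the stronger fact that $x^k-y^k\to 0$ and $x^k-z^k\to 0$ along the whole sequence, not just that limit points are feasible). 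The paper's multiplier-based route, conversely, does not reuse the descent condition beyond boundedness and fits the augmented-Lagrangian-style bookkeeping that motivates \eqref{abc}. Both arguments are valid under the stated hypotheses; your contradiction framing in the feasibility step is unnecessary (the argument is direct), but harmless.
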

\begin{proof}
Consider the vectors $\mu_y^k,\mu_z^k\in \mathbb{R}^{n}$ defined as follows
\begin{equation}\label{vinc}
\mu_y^k = \tau_k (x_k-y_k),\qquad
\mu_z^k = \tau_k (x_k-z_k)
\end{equation}
Recalling \eqref{b11}, we therefore have
\begin{equation}\label{b1}
\begin{aligned}
    \nabla_x P_{\tau_k} (x^k, y^k,z^k) &= \mu_y^k+\mu_z^k,\\
\nabla_y P_{\tau_k} (x^k, y^k,z^k) &= \nabla_x f _1(y^k) -\mu_y^k, \\
\nabla_z P_{\tau_k} (x^k, y^k,z^k ) &= \nabla_x f _2(z^k ) -\mu_z^k.
\end{aligned}
\end{equation}
Since, by Proposition \ref{boundedness}, the sequence $\{(x^k,y^k,z^k)\}$ is bounded, it admits limit points.
Let $(x^*,y^*,z^*)$ be any limit point, i.e., there exists an infinite subset $K\subseteq \{0,1,\ldots \}$ such that
$$
\lim_{k\in K,\,k\to\infty}(x^k,y^k,z^k)=(x^*,y^*,z^*)
$$
Taking the limits in \eqref{b1} for $k\in K$ and $k\to \infty$, recalling \eqref{c11}-\eqref{c12} and that $\xi_k\to 0$, we obtain
\begin{equation}\label{mugy2}
\nabla_{x}f_1(y^*)=\mu^*_y=\lim_{k\to\infty,\,k\in K}\mu_y^k,\qquad
\nabla_{x}f_2(z^*)=\mu^*_z=\lim_{k\to\infty,\,k\in K}\mu_z^k,
\end{equation}
Furthermore,  we can divide both sides of the two equations in \eqref{vinc} by $\tau_k$ and take again the limits, to obtain that
$x^* = y^*= z^*$.
Therefore, we have
\begin{equation}\label{abc}
\mu^*_y+\mu^*_z=
\nabla_xf_1(y^*)+\nabla_xf_2(z^*)=
\nabla f(x^*).
\end{equation}
Using \eqref{b1}, condition \eqref{c3} can be written
as follows
\begin{equation}\label{c2bis}
 \Vert x_k-\Pi_X[x_k-\mu_{y}^k-\mu_{z}^k] \Vert \le \xi_k
\end{equation}
Then,
taking the limits in (\ref{c2bis}) for $k\in K$ and $k\to\infty$, recalling that $\xi_k\to 0$,  and using (\ref{abc}), we obtain
$$
x^*=\Pi_X[x^*-\mu_{y}^*-\mu_{z}^*]=\Pi_X[x^*-\nabla f(x^*)].
$$
\end{proof}

\begin{remark}
    The result in Proposition \ref{prop_main} cannot be directly inferred, to the best of our knowledge, by known results in the literature. Indeed, most known results concerning penalty and augmented Lagrangian methods rely on different stationarity conditions for subproblems with lower-level constraints to be handled explicitly out of the penalty function (see, e.g., \cite{birgin2014practical,brilli2025interior}). On the other hand, the case of projection-based stationarity has been previously considered in \cite{galvan2020alternating,galvan2019convergence}, but a bond between the sequences $\{\tau_k\}$ and $\{\xi_k\}$ is required ($\tau_k\xi_k\to 0$). For this particular setup we thus provide a novel, improved result of convergence that does not need this particular interaction between the two sequences of parameters.
\end{remark}

\section{A derivative-free alternate minimization algorithm for penalty subproblems}\label{sec3}
For addressing problems of the form \eqref{prob_main}, we assumed not to have access to derivatives information. We are thus interested in devising a computationally convenient algorithmic scheme to tackle subproblems \eqref{prob:subprob} in a derivative-free fashion, so that the overall problem can be solved via Algorithm \ref{alg:spm}. The derivative-free setup poses two main challenges: from the one hand, subproblems have to be tackled without relying on gradient based methods; on the other hand, a maybe more subtle issue lies in the inner stopping condition: we cannot directly measure the size of the gradients $\|\nabla_{y,z} P_{\tau_k}(x^k,y^k,z^k)\|$. 

We observe that, for a given value of the penalty parameter $\tau$, it then follows that
\begin{itemize}
 \item [-] fixed $x$, we have two unconstrained separable problems w.r.t.\ $y$ and $z$;
 \item [-] fixed $y$ and $z$, we have a problem in $x$ whose unique globally optimal solution can be determined,  being it strictly convex quadratic with a separable objective function subject to convex constraints; the solution is in fact readily available by projecting the centroid of $y^{\ell+1}$ and $z^{\ell+1}$ (and the other blocks of variables whenever $m>2$) onto the convex set $X$.
\end{itemize}
Similarly as in the previous section, we denote by $k$ the counter of the outer iterations of the sequential penalty approach.
For a fixed $k$, we can then set up an inner optimization loop where we perform derivative-free inexact optimization w.r.t.\ $y$ and $z$ and the exact minimization with respect to $x$. We will denote by $\ell$ the iteration counter of the inner cycles. The algorithm is described in Algorithm \ref{alg:DFAM}. 

\begin{algorithm}[h]
	\caption{\texttt{Derivative-Free Alternate Minimization}}
	\label{alg:DFAM}
	\textbf{Input:}  $\hat{x}^{k-1}, \hat{y}^{k-1}, \hat{z}^{k-1} \in \mathbb{R}^n,$ $\tau_k>0$\\
    Set $\ell=0$\\
    $(x^\ell,y^\ell,z^\ell) = (\hat{x}^{k-1}, \hat{y}^{k-1}, \hat{z}^{k-1})$
    \\ Set $\tilde{\alpha}^\ell_{y}=\tilde{\alpha}^\ell_{z} = (1,\ldots,1)\in\mathbb{R}^n$
    \\\While{\textit{stopping condition not satisfied}}
    {
    $(y^{\ell+1},\tilde{\alpha}^{\ell+1}_{y}) = $ DF-Search($P_{\tau_k}(y,x)$, $x^\ell$, $y^\ell$, $\tilde{\alpha}^{\ell}_{y}$)\label{step:dfs_in_dfam}\\
    $(z^{\ell+1}, \tilde{\alpha}^{\ell+1}_{z}) = $ DF-Search($P_{\tau_k}(z,x)$, $x^\ell$, $z^\ell$, $\tilde{\alpha}^{\ell}_{z}$)\label{step:dfs_in_dfam2}\\
    Set $x^{\ell+1} \in \arg\min_{x\in X}P_{\tau_k}(x,y^{\ell+1},z^{\ell+1})$
    \\Set $\ell=\ell+1$
    }
    \Return $(x^{\ell},y^\ell,z^\ell)$
\end{algorithm}

Clearly, the method revolves around the DF-Search subprocedure, described in Algorithm 
\ref{alg:DFS}. In the latter, coordinate directions are explored according to classical derivative-free line-search principles \cite{Lucidi2002Derivative}: each direction (and its opposite) are polled for a given tentative stepsize, checking a sufficient decrease condition: if the step can be accepted, a further extrapolation is carried out as long as sufficient decrease is attained and the tentative stepsize at the subsequent iteration (for that direction) will be equal to the current actual stepsize; on the other hand, if sufficient decrease is not attained, the direction is discarded altogether for the iteration and the tentative stepsize is decreased for the next one. 

\begin{algorithm}[h]
	\caption{\texttt{Derivative-Free Search (DF-Search)}}
	\label{alg:DFS}
	\textbf{Input:}  $g:\mathbb{R}^n\times \mathbb{R}^n\to \mathbb{R}$, $x^\ell,w^1\in \mathbb{R}^n,$ $\tilde{\alpha}^\ell\in\mathbb{R}^n$,  $\theta\in(0,1)$, $\gamma\in(0,1)$
    \\
    Let $d_1,\ldots,d_n = e_1,\ldots,e_n$\\
    \For{i=1,\ldots,n}{
    \If{$g(x^\ell,w^i+\tilde{\alpha}^\ell_id_i)\le g(x^\ell,w^i)-\gamma(\tilde{\alpha}^\ell_i)^2$}
    {Set $\alpha_i = \max_{j=0,1,\ldots}\{\tilde{\alpha}^\ell_i\theta^{-j}\mid g(x^\ell,w^i+\tilde{\alpha}^\ell_i\theta^{-j}d_i)\le g(x^\ell,w^i)-\gamma(\tilde{\alpha}^\ell_i\theta^{-j})^2\}$
    \\
    Set $w^{i+1} = w^i+\alpha_id_i$
    \\ Set $\tilde{\alpha}_i^{\ell+1} = \alpha_i$}
    \ElseIf{$g(x^\ell,w^i-\tilde{\alpha}^\ell_id_i)\le g(x^\ell,w^i)-\gamma(\tilde{\alpha}^\ell_i)^2$   
    }{Set $\alpha_i = \max_{j=0,1,\ldots}\{\tilde{\alpha}^\ell_i\theta^{-j}\mid g(x^\ell,w^i-\tilde{\alpha}^\ell_i\theta^{-j}d_i)\le g(x^\ell,w^i)-\gamma(\tilde{\alpha}^\ell_i\theta^{-j})^2\}$
    \\
    Set $w^{i+1} = w^i-\alpha_id_i$\\
    Set $\tilde{\alpha}_i^{\ell+1} = \alpha_i$}
    \Else{
    Set $\alpha_i = 0$, $\tilde{\alpha}_i^{\ell+1} = \theta \tilde{\alpha}_i^\ell$
    }
    }
    \Return $w^{n+1},\tilde{\alpha}^{\ell+1}$  
\end{algorithm}

We can now turn to the analysis of the inner derivative-free optimization method. 

\subsection{Convergence analysis}
\label{sec31}
We begin the analysis for Algorithms \ref{alg:DFAM}-\ref{alg:DFS} stating the following preliminary results.
Note that the property, stated for the $y$ block of variables, straightforwardly holds also for $z$. 
\begin{proposition}\label{preliminary_df}
Assume that the gradients of functions $f_j$, $j=1,\ldots,m$, are Lipschitz continuous over $\mathbb{R}^n$. Then, for any $\tau_k>0$, there exist a constants $c_1,c_2>0$ such that for all $\ell$ we have
$$
\Vert \nabla_{y}P_{\tau_k}(x^\ell,y^\ell,z^\ell)\Vert\le (c_1+c_2\tau_k)\max_{i=1,\ldots ,n}\tilde\alpha_{yi}^{\ell+1},
$$
where $\hat{\alpha}_{yi}^{\ell+1}$ denotes $(\tilde{\alpha}_{y}^{\ell+1})_i$.
\end{proposition}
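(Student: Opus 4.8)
The plan is to relate the gradient $\nabla_y P_{\tau_k}(x^\ell,y^\ell,z^\ell)$ to the stepsizes $\tilde\alpha_{yi}^{\ell+1}$ by exploiting the sufficient-decrease / failure mechanism of DF-Search applied to $g(\cdot,\cdot)=P_{\tau_k}(y,x)$ along the coordinate directions $e_1,\dots,e_n$. First I would fix the outer index $k$ and work within a single call to DF-Search at inner iteration $\ell$, writing $g^\ell(w) = P_{\tau_k}(w, x^\ell)$ for brevity; note that this function has Lipschitz-continuous gradient with constant, say, $L_1+\tau_k$ (the $L_1$ coming from the assumed Lipschitz constant of $\nabla f_1$, the $\tau_k$ from the quadratic penalty term), so that the standard descent-lemma estimate $|g^\ell(w\pm\alpha e_i) - g^\ell(w) \mp \alpha \partial_i g^\ell(w)| \le \tfrac{1}{2}(L_1+\tau_k)\alpha^2$ holds for every $i$ and every $\alpha>0$.

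Next I would split into the three cases that DF-Search can realize at coordinate $i$. In the \emph{failure case}, neither $+e_i$ nor $-e_i$ produced sufficient decrease at the tentative stepsize $\tilde\alpha_{yi}^\ell$, i.e. $g^\ell(w^i \pm \tilde\alpha_{yi}^\ell e_i) > g^\ell(w^i) - \gamma(\tilde\alpha_{yi}^\ell)^2$; combining each of these with the descent-lemma bound gives $|\partial_i g^\ell(w^i)| \le \big(\gamma + \tfrac{1}{2}(L_1+\tau_k)\big)\tilde\alpha_{yi}^\ell$, and since in this case $\tilde\alpha_{yi}^{\ell+1} = \theta\tilde\alpha_{yi}^\ell$, we get $|\partial_i g^\ell(w^i)| \le \big(\gamma+\tfrac12(L_1+\tau_k)\big)\theta^{-1}\tilde\alpha_{yi}^{\ell+1}$. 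In the \emph{success/extrapolation case} the accepted stepsize $\alpha_i$ satisfies $\tilde\alpha_{yi}^{\ell+1}=\alpha_i$, and the fact that extrapolation stopped means the next trial $\alpha_i\theta^{-1}$ failed the Armijo test, $g^\ell(w^i \pm \alpha_i\theta^{-1} e_i) > g^\ell(w^i) - \gamma(\alpha_i\theta^{-1})^2$ (or else $\alpha_i$ was already maximal — in line-search arguments one typically argues that the extrapolation cannot proceed indefinitely because $g^\ell$ is coercive / bounded below, so a finite stopping index exists); feeding this failed trial into the descent lemma again yields $|\partial_i g^\ell(w^i)| \le \big(\gamma\theta^{-1} + \tfrac12(L_1+\tau_k)\theta^{-1}\big)\alpha_i = C(\tau_k)\,\tilde\alpha_{yi}^{\ell+1}$ for an appropriate $C(\tau_k)$ of the form $c_1'+c_2'\tau_k$. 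Thus in all cases $|\partial_i g^\ell(w^i)| \le (c_1'+c_2'\tau_k)\,\tilde\alpha_{yi}^{\ell+1}$.

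The remaining gap is that these estimates bound the partial derivatives of $g^\ell$ at the intermediate points $w^i$ (which equal $y^\ell$ shifted by the accepted moves in coordinates $1,\dots,i-1$), not at $y^\ell = w^1$ itself. To close this, I would use Lipschitz continuity of $\nabla g^\ell$ once more: $\|\nabla g^\ell(w^i) - \nabla g^\ell(w^1)\| \le (L_1+\tau_k)\sum_{j<i}|\alpha_j| = (L_1+\tau_k)\sum_{j<i}\tilde\alpha_{yj}^{\ell+1}$ (using $|\alpha_j| = \tilde\alpha_{yj}^{\ell+1}$ on successful coordinates and $\alpha_j=0$ otherwise, and in the failure case $\tilde\alpha_{yj}^{\ell+1}=\theta\tilde\alpha_{yj}^\ell$ so the displacement $0$ is trivially bounded). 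Combining the per-coordinate bounds with this, and absorbing the factor $n$, gives
\begin{equation*}
\|\nabla_y P_{\tau_k}(x^\ell,y^\ell,z^\ell)\| = \|\nabla g^\ell(w^1)\| \le \Big(\textstyle\sum_{i=1}^n |\partial_i g^\ell(w^i)|^2\Big)^{1/2} + \|\nabla g^\ell(w^1)-\nabla g^\ell(w^{\cdot})\|\text{-type terms} \le (c_1+c_2\tau_k)\max_{i}\tilde\alpha_{yi}^{\ell+1},
\end{equation*}
after collecting constants. The main obstacle I anticipate is the extrapolation case: one must rule out an infinite extrapolation sequence (so that the "next trial failed" inequality is actually available), which relies on $P_{\tau_k}(\cdot,x^\ell)$ being bounded below along the ray — this follows from Proposition \ref{prop:pf_coercive} / Corollary \ref{compactness} — and then carefully track that the constants produced in the extrapolation and failure branches can be merged into a single $c_1+c_2\tau_k$ uniformly over $\ell$; the coordinate-by-coordinate bookkeeping of how $w^i$ drifts away from $y^\ell$ is the other place where care (rather than ideas) is needed.
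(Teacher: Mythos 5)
Your proposal follows essentially the same route as the paper's proof: a per-coordinate case split between failed and accepted/extrapolated steps, Lipschitz continuity of $\nabla_y P_{\tau_k}$ with constant of order $L+\tau_k$ to convert the sufficient-decrease tests into bounds on $\partial_i g$ at the intermediate points $w^i$, and a further Lipschitz transfer back to $y^\ell$ accounting for the drift $\Vert w^i-y^\ell\Vert\le n\max_h \tilde\alpha_{yh}^{\ell+1}$ (the paper uses the Mean Value Theorem where you use the descent lemma, which is immaterial). The only point to tighten is the success branch: the failed extrapolation trial at $\alpha_i\theta^{-1}$ bounds $\partial_i g^\ell(w^i)$ from one side only, and the opposite bound must be extracted from the acceptance inequality $g^\ell(w^i+\alpha_i d_i)\le g^\ell(w^i)-\gamma\alpha_i^2$ --- precisely the pair of inequalities the paper combines in its case (b).
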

\begin{proof}
Let us consider a generic iteration $\ell$ and let us denote by $w^{\ell,i}$ the intermediate points visited in the DF-Search procedure called at step \ref{step:dfs_in_dfam} in this iteration. Given any $i\in\{1,\ldots ,n\}$, we can distinguish two cases:
\begin{itemize}
 \item [(a)] $w^{\ell,i+1}=w^{\ell, i}$;
 \item[(b)]  $w^{\ell,i+1}\ne w^{\ell, i}$.
\end{itemize}
{\it Case (a)}. From the instructions of Algorithm \ref{alg:DFS}, letting $g(x^\ell,w) = P_{\tau_k}(x^\ell,w,z^\ell)$, we have 
$$
g\left(x^\ell,w^{\ell,i}\pm{{\tilde\alpha_{yi}^\ell}} d_i\right) >
g(x^\ell,w^{\ell,i})-\gamma\left({{\tilde\alpha_{yi}^\ell}}\right)^2,
$$
i.e., since $\tilde{\alpha}_{yi}^{\ell+1} = \theta \tilde{\alpha}_{yi}^\ell$,
$$
g\left(x^\ell,w^{\ell,i}\pm{{\tilde\alpha_{yi}^{\ell+1}}\over{\theta}} d_i\right) >
g(x^\ell,w^{\ell,i})-\gamma\left({{\tilde\alpha_{yi}^{\ell+1}}\over{\theta}}\right)^2.
$$
By applying the Mean Value Theorem in the above inequality in the case of direction $+d_i$, we have
\begin{gather}
    \label{mv1}
\nabla_w g(x^\ell,u_i^\ell)^Te_i\ge -\gamma\left({{\tilde\alpha_{yi}^{\ell+1}}\over{\theta}}\right), \quad u_i^\ell=w^{\ell,i}+t_i^\ell{{\tilde\alpha_{yi}^{\ell+1}}\over{\theta}} d_i,\quad t_i^\ell\in (0,1)
\end{gather}
Recalling that functions $f_j$ are $L$-smooth and that $g(x^\ell,w) = f_1(w)+\frac{\tau_k}{2}\|w-x^\ell\|^2 + C$ where $C$ is some constant, we can deduce that $g$ also has also gradients $\nabla_w g(x^\ell,w)$ that are Lipschitz continuous for some constant $L_2\le L+\tau_k$ (independent of $x^\ell$).
We can then write
\begin{align*}
   \nabla_w g(x^\ell,u_i^\ell)^Te_i
&=\left(\nabla_w g(x^\ell,u_i^\ell)^Te_i-\nabla_w g(x^\ell,y^{\ell})^Te_i+\nabla_w g(x^\ell,y^{\ell})^Te_i\right)
\\& \le
\nabla_{w_i} g(x^\ell,y^{\ell})+\Vert \nabla_w g(x^\ell,u_i^\ell)-\nabla_w g(x^\ell,y^{\ell})\Vert \\& \le
 \nabla_{w_i} g(x^\ell,y^{\ell})+(L+\tau_k)\Vert u_i^\ell-y^\ell \Vert  \\&\le \nabla_{w_i} g(x^\ell,y^{\ell})+(L+\tau_k)\Vert w^{\ell,i}-y^\ell\Vert +(L+\tau_k){{\tilde\alpha_{yi}^{\ell+1}}\over{\theta}}\\& \le \nabla_{w_i} g(x^\ell,y^{\ell})+n(L+\tau_k)\max_{h=1,\ldots ,n}\{\tilde\alpha_{yh}^{\ell+1}\} +(L+\tau_k){{\tilde\alpha_{yi}^{\ell+1}}\over{\theta}} \\& \le \nabla_{w_i} g(x^\ell,y^{\ell})+\frac{(n+1)(L+\tau_k)}{\theta}\max_{h=1,\ldots ,n}\{\tilde\alpha_{yh}^{\ell+1}\},  
\end{align*}
so that, by \eqref{mv1}, it follows
$$
 \nabla_{w_i} g(x^\ell,y^{\ell})\ge -\frac{(n+1)(L+\tau_k)+\gamma}{\theta}\max_{h=1,\ldots ,n}\{\tilde\alpha_{yh}^{\ell+1}\}.
$$
By an analogous reasoning for the case of direction $-d_i$ we obtain
$$
 \nabla_{w_i} g(x^\ell,y^{\ell})\le \frac{(n+1)(L+\tau_k)+\gamma}{\theta}\max_{h=1,\ldots ,n}\{\tilde\alpha_{yh}^{\ell+1}\}.
$$
We can therefore conclude that
\begin{equation}\label{mmm1}
|\nabla_{w_i} g(x^\ell,y^{\ell})|  = \vert\nabla_{y_i} P_{\tau_k}(x^\ell,y^{\ell},z^\ell)\vert\le \frac{(n+1)(L+\tau_k)+\gamma}{\theta}\max_{h=1,\ldots ,n}\{\tilde\alpha_{yh}^{\ell+1}\}.
\end{equation}

\smallskip
\noindent{\it Case (b)}. Again, from the instructions of Algorithm \ref{alg:DFS} and assuming without loss of generality that sufficient decrease is obtained along $+d_i$, we have
\begin{align*}
    g(x^\ell,w^{\ell,i}+\tilde\alpha_{yi}^{\ell+1} d_i)&\le
g(x^\ell,w^{\ell,i})-\gamma\left(\tilde\alpha_{yi}^{\ell+1}\right)^2\\&\le
g(x^\ell,w^{\ell,i})+\gamma\left({{\tilde\alpha_{yi}^{\ell+1}}\over{\theta}}\right)^2,
\end{align*}
and also
$$
g\left(x^\ell,w^{\ell,i}+{{\tilde\alpha_{yi}^{\ell+1}}\over{\theta}} d_i\right) >
g(x^\ell,w^{\ell,i})-\gamma\left({{\tilde\alpha_{yi}^{\ell+1}}\over{\theta}}\right)^2.
$$
By the Mean Value Theorem, we get
\begin{gather*}
    \nabla_w g(x^\ell,u_i^\ell)^Te_i\le \gamma\left({{\tilde\alpha_{yi}^{\ell+1}}\over{\theta}}\right),\quad u_i^\ell=w^{\ell,i}+t_i^\ell{{\tilde\alpha_{yi}^{\ell+1}}} e_i, \quad t_i^\ell\in(0,1),\\\nabla_w g(x^\ell,v_i^\ell)^Te_i>-\gamma\left({{\tilde\alpha_{yi}^{\ell+1}}\over{\theta}}\right),\quad v_i^\ell=w^{\ell,i}+q_i^\ell{{\tilde\alpha_{yi}^{\ell+1}}\over{\theta}} e_i, \quad q_i^\ell\in(0,1).
\end{gather*}
Following a similar reasoning as in case (a), we obtain
\begin{gather*}
    \nabla_{w_i} g(x^\ell,y^{\ell})\le \frac{(n+1)(L+\tau_k)+\gamma}{\theta}\max_{h=1,\ldots ,n}\{\tilde\alpha_{yh}^{\ell+1}\},\\\nabla_{w_i} g(x^\ell,y^{\ell})\ge -\frac{(n+1)(L+\tau_k)+\gamma}{\theta}\max_{h=1,\ldots ,n}\{\tilde\alpha_{yh}^{\ell+1}\},
\end{gather*}
i.e.,
\begin{equation}\label{mmm2}
|\nabla_{w_i} g(x^\ell,y^{\ell})|  = \vert\nabla_{y_i} P_{\tau_k}(x^\ell,y^{\ell},z^\ell)\vert\le \frac{(n+1)(L+\tau_k)+\gamma}{\theta}\max_{h=1,\ldots ,n}\{\tilde\alpha_{yh}^{\ell+1}\}.
\end{equation}
Finally, from (\ref{mmm1}) and (\ref{mmm2}),
we may conclude that
$$
\|\nabla_y P_{\tau_k}(y^\ell,z^\ell,x^\ell)\|\le \sqrt{n} \frac{(n+1)(L+\tau_k)+\gamma}{\theta}\max_{h=1,\ldots ,n}\{\tilde\alpha_{yi}^\ell\},
$$
and this concludes the proof.

\end{proof}

We next prove a result concerning the sequences of stepsizes generated by using Algorithms \ref{alg:DFAM}-\ref{alg:DFS}. In particular, we show that if Algorithm \ref{alg:DFAM} did not stop, the stepsizes would go to zero. 

\begin{proposition}\label{dist_0}
Assume that $\ell\to \infty$ in Algorithm \ref{alg:DFAM}.
Then, for all $i=1,\ldots ,n$, the following properties hold:
\begin{gather}
    \label{step1}
\lim_{\ell\to\infty} \alpha_{yi}^\ell=0,\qquad \lim_{\ell\to\infty} \alpha_{zi}^\ell=0,\\
\label{step2}
\lim_{\ell\to\infty} \tilde \alpha_{yi}^\ell=0, \qquad \lim_{\ell\to\infty} \tilde \alpha_{zi}^\ell=0.
\end{gather}
\end{proposition}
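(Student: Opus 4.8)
The plan is to combine a standard telescoping sufficient-decrease argument, which handles the accepted (actual) stepsizes $\alpha_{yi}^\ell,\alpha_{zi}^\ell$ and yields \eqref{step1}, with the geometric decay that the tentative stepsizes $\tilde\alpha_{yi}^\ell,\tilde\alpha_{zi}^\ell$ undergo at failed iterations, which transfers the conclusion from \eqref{step1} to \eqref{step2}.

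First I would show that $\{P_{\tau_k}(x^\ell,y^\ell,z^\ell)\}$ is non-increasing and that the accepted stepsizes are square-summable. By the instructions of Algorithm~\ref{alg:DFS} applied with $g(x^\ell,w)=P_{\tau_k}(x^\ell,w,z^\ell)$, each accepted coordinate step satisfies the sufficient-decrease inequality $g(x^\ell,w^{\ell,i+1})\le g(x^\ell,w^{\ell,i})-\gamma(\alpha_{yi}^\ell)^2$, which also holds trivially (with equality) when $\alpha_{yi}^\ell=0$, since then $w$ is unchanged along $d_i$; summing over $i=1,\dots,n$ gives $P_{\tau_k}(x^\ell,y^{\ell+1},z^\ell)\le P_{\tau_k}(x^\ell,y^\ell,z^\ell)-\gamma\sum_{i=1}^n(\alpha_{yi}^\ell)^2$. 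The analogous inequality for the $z$-search (now with $g(x^\ell,w)=P_{\tau_k}(x^\ell,y^{\ell+1},w)$) together with the fact that $x^{\ell+1}$ minimizes $P_{\tau_k}(\cdot,y^{\ell+1},z^{\ell+1})$ over $X$ then chain to
\[
P_{\tau_k}(x^{\ell+1},y^{\ell+1},z^{\ell+1})\le P_{\tau_k}(x^\ell,y^\ell,z^\ell)-\gamma\sum_{i=1}^n\big[(\alpha_{yi}^\ell)^2+(\alpha_{zi}^\ell)^2\big].
\]
Since $P_{\tau_k}$ is coercive by Proposition~\ref{prop:pf_coercive} (in particular bounded below), the left-hand side is a monotone, bounded-below, hence convergent, sequence; telescoping the displayed inequality yields $\sum_{\ell\ge0}\sum_{i=1}^n[(\alpha_{yi}^\ell)^2+(\alpha_{zi}^\ell)^2]<\infty$, which forces $\alpha_{yi}^\ell\to0$ and $\alpha_{zi}^\ell\to0$ for every $i$. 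This is exactly \eqref{step1}.

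Next I would deduce \eqref{step2} coordinatewise, the argument being identical for the $y$- and $z$-blocks. Recall that $\tilde\alpha_{yi}^{\ell+1}=\alpha_{yi}^\ell$ whenever iteration $\ell$ is a ``success'' for coordinate $i$ (i.e.\ $\alpha_{yi}^\ell>0$) and $\tilde\alpha_{yi}^{\ell+1}=\theta\,\tilde\alpha_{yi}^\ell$ whenever it is a ``failure'' ($\alpha_{yi}^\ell=0$), with $\theta\in(0,1)$. If coordinate $i$ is a success only finitely often, then past the last success (or from the start, if it is never a success) every iteration is a failure, so $\tilde\alpha_{yi}^\ell$ decays geometrically and tends to $0$. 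If coordinate $i$ is a success infinitely often, set $s(\ell)=\max\{\ell'<\ell:\alpha_{yi}^{\ell'}>0\}$, which is defined for all $\ell$ large enough; since iterations $s(\ell)+1,\dots,\ell-1$ are all failures, $\tilde\alpha_{yi}^\ell=\theta^{\,\ell-1-s(\ell)}\alpha_{yi}^{s(\ell)}\le\alpha_{yi}^{s(\ell)}$, and $s(\ell)\to\infty$ as $\ell\to\infty$, so $\alpha_{yi}^{s(\ell)}\to0$ by \eqref{step1}. In either case $\tilde\alpha_{yi}^\ell\to0$, and the same holds for $\tilde\alpha_{zi}^\ell$, which establishes \eqref{step2}.

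Most of the argument is routine and essentially available from the preceding results; the only delicate point is the bookkeeping in the last step — correctly tracking the evolution of $\tilde\alpha_{yi}$ over a run of consecutive failures between two successes, and justifying that $s(\ell)\to\infty$ — but this becomes elementary once the dichotomy ``finitely many vs.\ infinitely many successes for coordinate $i$'' is in place. I therefore do not expect a substantive obstacle beyond this careful casework.
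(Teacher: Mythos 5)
Your proposal is correct and follows essentially the same route as the paper: a telescoping sufficient-decrease argument (using monotonicity of $P_{\tau_k}$ along the alternating steps and its boundedness below) to get \eqref{step1}, followed by the success/failure dichotomy per coordinate — tracking the geometric decay of $\tilde\alpha_{yi}$ over runs of consecutive failures back to the last successful iteration — to transfer the conclusion to \eqref{step2}. The paper phrases the second step as a split of the iteration indices into the sets $T$ and $\bar T$ rather than a case distinction on whether successes occur infinitely often, but the argument is identical.
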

\begin{proof}
 The instructions of the algorithm imply  that for all $\ell$ we have
 $$
 P_{\tau_k}(x^{\ell+1},y^{\ell +1},z^{\ell +1})\le
 P_{\tau_k}(x^{\ell},y^{\ell +1},z^{\ell +1})\le
 P_{\tau_k}(x^{\ell},y^{\ell },z^{\ell }),
 $$
 and hence it follows that the sequence
$\{P_{\tau_k}(x^{\ell},y^{\ell },z^{\ell })\}$ converges. Furthermore, by the instructions of Algorithm \ref{alg:DFS}, the update of each individual variable from vectors $y$ and $z$ guarantees a sufficient decrease of $-\gamma {(\alpha_{i}^\ell)}^2$ (this is trivially satisfied even when $\alpha_i^\ell=0$); we can therefore write
$$
P_{\tau_k}(x^{\ell},y^{\ell +1},z^{\ell +1})\le
P_{\tau_k}(x^{\ell},y^{\ell },z^{\ell })-\gamma\displaystyle{\sum_{i=1}^{n}}\left((\alpha_{yi}^\ell)^2
+(\alpha_{zi}^\ell)^2\right).
$$
Taking into account the convergence of $\{P_{\tau_k}(x^{\ell},y^{\ell },z^{\ell })\}$, it follows that \eqref{step1} holds.

We now prove \eqref{step2}; for the sake of simplicity we consider the sequence $\{\tilde \alpha^\ell_y\}$: the result for $\{\tilde \alpha^\ell_z\}$ can be obtained identically. For any $i\in \{1,\ldots ,n\}$, we split the set of indices of iterates $\{0,1,\ldots \}$ into
two subsets $T$ and $\bar
T$ (for notational convenience we omit the dependence on $i$), where:
\begin{itemize}
\item[-] $\ell\in T$ if and only if $\alpha_{yi}^\ell>0$; \item[-] $\ell\in \bar
T$ if and only if $\alpha_{yi}^\ell=0$.
\end{itemize}
For each $\ell\in T$ we have $\tilde\alpha_{yi}^{\ell+1}=\alpha_{yi}^\ell$, and hence,
if $T$ is an infinite subset, from \eqref{step1} it follows
\begin{equation*}
\lim_{\ell\in T,\ell\to\infty}\tilde\alpha_{yi}^{\ell+1}=0.
\end{equation*}
For every $\ell\in \bar T$, let $m_\ell$ be the largest index such that $m_\ell<\ell$ with
$m_\ell\in T$ (we assume $m_\ell=0$ if this index does not exist, i.e., $T$ is empty).
We can write
$$
\tilde\alpha_{yi}^{\ell+1}=\theta^{\ell-m_\ell}\tilde\alpha_{yi}^{m_\ell+1 }\le \tilde\alpha_{yi}^{m_\ell +1}.
$$
For $\ell\in \bar T$ and $\ell\to\infty$  we have that either $m_\ell\to\infty$ (if
$T$ is an infinite subset) or $\ell-m_\ell\to \infty$ (if $T$ is finite). In the former case, we have
$$\lim_{\ell\in \bar{T},\ell\to\infty}\tilde\alpha_{yi}^{\ell+1}\le \lim_{\ell\in \bar{T},\ell\to\infty}\tilde\alpha_{yi}^{m_\ell +1}=\lim_{\ell\in T,\ell\to\infty}\tilde\alpha_{yi}^{\ell +1} =0.$$
In the latter case, letting $\bar{m}$ be the last index in $T$ and recalling that $\theta\in (0,1)$, we have 
$$\lim_{\ell\in \bar{T},\ell\to\infty}\tilde\alpha_{yi}^{\ell+1}=\lim_{\ell\in \bar{T},\ell\to\infty}\theta^{\ell-\bar{m}}\tilde\alpha_{yi}^{\bar{m}+1} = 0.$$
Putting everything together, we get \eqref{step2}.
\end{proof}

Finally, we show the convergence property w.r.t.\ the $x$ block of variables.
\begin{proposition}\label{dec_properties}
Assume that $\ell\to \infty$ in Algorithm \ref{alg:DFAM}.
Then:
$$
    \lim_{\ell\to\infty}\Vert x^\ell-\Pi_X[x^\ell-\nabla_{x}P_{\tau_k}(x^\ell,y^\ell,z^\ell)]=0.
$$
\end{proposition}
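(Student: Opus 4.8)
The plan is to exploit that $x^{\ell+1}$ is the \emph{exact} minimizer over $X$ of the strictly (indeed $2\tau_k$-strongly) convex quadratic $x\mapsto P_{\tau_k}(x,y^{\ell+1},z^{\ell+1})$, so that the first-order optimality condition for a convex problem on a convex set yields
$$
x^{\ell+1}=\Pi_X\!\left[x^{\ell+1}-\nabla_x P_{\tau_k}(x^{\ell+1},y^{\ell+1},z^{\ell+1})\right].
$$
Introduce the map $G_k(x,y,z):=x-\Pi_X[x-\nabla_x P_{\tau_k}(x,y,z)]$; since $\nabla_x P_{\tau_k}(x,y,z)=\tau_k(x-y)+\tau_k(x-z)$ is affine in $(x,y,z)$ and $\Pi_X$ is nonexpansive, $G_k$ is globally Lipschitz continuous with some constant $L_k$. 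The quantity to be driven to zero is $\|G_k(x^\ell,y^\ell,z^\ell)\|$, and the displayed optimality condition says precisely $G_k(x^{\ell+1},y^{\ell+1},z^{\ell+1})=0$. Hence
$$
\|G_k(x^\ell,y^\ell,z^\ell)\|=\|G_k(x^\ell,y^\ell,z^\ell)-G_k(x^{\ell+1},y^{\ell+1},z^{\ell+1})\|\le L_k\left(\|x^\ell-x^{\ell+1}\|+\|y^\ell-y^{\ell+1}\|+\|z^\ell-z^{\ell+1}\|\right),
$$
and it suffices to show that each of the three increments vanishes as $\ell\to\infty$.

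For the $y$- and $z$-increments I would use the structure of DF-Search: starting from $w^{1}=y^\ell$, the procedure returns $y^{\ell+1}=w^{n+1}$ with $w^{i+1}=w^{i}\pm\alpha^\ell_{yi}d_i$ (and $w^{i+1}=w^{i}$ when $\alpha^\ell_{yi}=0$), so that $\|y^{\ell+1}-y^\ell\|\le\sum_{i=1}^{n}\alpha^\ell_{yi}$, and similarly $\|z^{\ell+1}-z^\ell\|\le\sum_{i=1}^{n}\alpha^\ell_{zi}$. By Proposition \ref{dist_0}, $\alpha^\ell_{yi}\to0$ and $\alpha^\ell_{zi}\to0$ for every $i$, hence both increments tend to zero.

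The main step is the $x$-increment. As established within the proof of Proposition \ref{dist_0}, the monotone chain
$$
P_{\tau_k}(x^{\ell+1},y^{\ell+1},z^{\ell+1})\le P_{\tau_k}(x^{\ell},y^{\ell+1},z^{\ell+1})\le P_{\tau_k}(x^{\ell},y^{\ell},z^{\ell})
$$
holds and the sequence $\{P_{\tau_k}(x^\ell,y^\ell,z^\ell)\}$ converges, whence the consecutive gap $P_{\tau_k}(x^{\ell},y^{\ell+1},z^{\ell+1})-P_{\tau_k}(x^{\ell+1},y^{\ell+1},z^{\ell+1})\to0$. On the other hand, $P_{\tau_k}(\cdot,y^{\ell+1},z^{\ell+1})$ is $2\tau_k$-strongly convex with minimizer $x^{\ell+1}$ over the convex set $X$, so the standard strong-convexity estimate (using the variational inequality $\langle\nabla_x P_{\tau_k}(x^{\ell+1},y^{\ell+1},z^{\ell+1}),x^\ell-x^{\ell+1}\rangle\ge0$) gives
$$
P_{\tau_k}(x^{\ell},y^{\ell+1},z^{\ell+1})-P_{\tau_k}(x^{\ell+1},y^{\ell+1},z^{\ell+1})\ge\tau_k\|x^{\ell}-x^{\ell+1}\|^{2}.
$$
Combining the last two facts yields $\|x^\ell-x^{\ell+1}\|\to0$; inserting this together with the $y$- and $z$-estimates into the triangle inequality above concludes the proof.

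I expect the only genuinely non-routine point to be the $x$-increment, where strong convexity of the $x$-subproblem is what converts the vanishing of consecutive objective gaps (obtained exactly as in Proposition \ref{dist_0}, from monotonicity plus boundedness below) into $\|x^\ell-x^{\ell+1}\|\to0$, thereby bridging the known fixed-point identity at $x^{\ell+1}$ and the claimed limit at $x^\ell$. The Lipschitz bookkeeping for $G_k$ and the telescoping control of the DF-Search steps for $y$ and $z$ are routine.
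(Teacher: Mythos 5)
Your argument is correct, but it follows a genuinely different route from the paper's. The paper proceeds by contradiction: it uses compactness of the level set to extract a convergent subsequence, compares the exact $x$-minimization with an Armijo step along the projected-gradient direction $d_x^\ell= x^\ell-\Pi_X[x^\ell-\nabla_{x}P_{\tau_{k}}(x^\ell,y^{\ell+1},z^{\ell+1})]$, invokes the standard Armijo/forcing-function property to get $\nabla_xP_{\tau_k}^Td_x^\ell\to0$, and then uses $\nabla_xP_{\tau_k}^Td_x^\ell\le-\|d_x^\ell\|^2$ together with continuity of $\Pi_X$ to reach a contradiction. You instead argue directly: exact optimality of $x^{\ell+1}$ gives the fixed-point identity $G_k(x^{\ell+1},y^{\ell+1},z^{\ell+1})=0$; the residual map $G_k$ is globally Lipschitz because $\nabla_xP_{\tau_k}$ is affine and $\Pi_X$ is nonexpansive; the $y$- and $z$-increments vanish by Proposition \ref{dist_0}; and the $2\tau_k$-strong convexity of $P_{\tau_k}(\cdot,y^{\ell+1},z^{\ell+1})$ converts the vanishing consecutive objective gaps (obtained from the same monotone sandwich the paper uses) into $\|x^{\ell+1}-x^\ell\|\to0$. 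All the individual steps check out (in particular the estimate $P_{\tau_k}(x^{\ell},y^{\ell+1},z^{\ell+1})-P_{\tau_k}(x^{\ell+1},y^{\ell+1},z^{\ell+1})\ge\tau_k\|x^{\ell}-x^{\ell+1}\|^{2}$ is valid since $x^\ell\in X$ for $\ell\ge1$ and the variational inequality holds at the constrained minimizer). What each approach buys: yours is quantitative, avoids subsequence extraction and the contradiction scaffolding, and arguably makes the mechanism more transparent; on the other hand it leans specifically on the quadratic (affine-gradient, strongly convex) structure of the penalty in $x$, whereas the paper's Armijo-based comparison argument would survive essentially unchanged if the exact $x$-minimization were replaced by an inexact descent step or a non-quadratic coupling term.
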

\begin{proof}
By the instructions of Algorithm \ref{alg:DFAM}, $\{P_{\tau_k}(x^\ell,y^\ell,z^\ell)\}$ is a decreasing sequence, hence $\{(x^\ell,y^\ell,z^\ell)\}\subseteq\mathcal{L}_{0}^{\tau_k}=\{(x,y,z)\mid P_{\tau_k}(x,y,z)\le P_{\tau_k}(\hat{x}^{k-1}, \hat{y}^{k-1}, \hat{z}^{k-1}) \}$. By Corollary \ref{compactness}, the level set $\mathcal{L}_0^{\tau_k}$ is compact, therefore the sequence $\{(x^\ell,y^\ell,z^\ell)\}$ admits limit points.
Let us now assume, by contradiction, that there exists an infinite subset ${T} $ such that
\begin{equation}\label{conv}
\lim_{\ell \in {T},\ell\to\infty}(x^\ell,y^\ell,z^\ell)=
(\bar x,\bar y, \bar z)
\end{equation}
with
\begin{equation}\label{false}
\Vert \bar x-\Pi_X[\bar x-\nabla_{x}P_{\tau_k}(\bar x,\bar y,\bar z)]\Vert \ge \eta>0.
\end{equation}
From Proposition \ref{dist_0} it follows
\begin{equation}\label{ellp1}
 \begin{aligned}
  \lim_{\ell\in {T},\ell\to\infty} y^{\ell+1}&=\bar y,\\
  \lim_{\ell\in {T},\ell\to\infty} z^{\ell+1}&=\bar z.
    \end{aligned}
\end{equation}
The instructions of Algorithm \ref{alg:DFAM} imply
$$
P_{\tau_{k}}(x^{\ell+1},y^{\ell+1},z^{\ell+1})\le
P_{\tau_{k}}(x^\ell,y^\ell,z^\ell)
$$
and
$$
P_{\tau_{k}}(x^{\ell+1},y^{\ell+1},z^{\ell+1}) \le
P_{\tau_{k}}(x^{\ell}+\alpha_\ell d_x^\ell,y^{\ell+1},z^{\ell+1})\le
P_{\tau_{k}}(x^{\ell},y^{\ell+1},z^{\ell+1}),
$$
where
$$
d_x^\ell= x^\ell-\Pi_X[x^\ell-\nabla_{x}P_{\tau_{k}}(x^\ell,y^{\ell+1},z^{\ell+1})]
$$
and $\alpha_\ell$ is determined by the Armijo-type line-search.
From the properties of the Armijo-type line-search (\cite[Prop.\ 20.4]{grippo2023introduction}) we get
$$
\lim_{\ell\in {T},\ell\to\infty}\nabla_xP_{\tau_{k}}(x^{\ell},y^{\ell+1},z^{\ell+1})^Td_x^\ell=0.
$$
By the properties of the projected gradient direction \cite[Eq.\ (20.15)]{grippo2023introduction}, we also have
$$
\nabla_xP_{\tau_{k}}(x^{\ell},y^{\ell+1},z^{\ell+1})^Td_x^\ell \le -\|d_x^\ell\|^2.
$$
We can therefore conclude that
$$
\lim_{\ell\in {T},\ell\to\infty}
\Vert x^\ell-\Pi_X[x^\ell-\nabla_{x}P_{\tau_{k}}(x^\ell,y^{\ell+1},z^{\ell+1})]\Vert=0.
$$
Then, recalling \eqref{conv}, \eqref{ellp1} and the continuity properties of the projection mapping,
we obtain
$$
\Vert \bar x-\Pi_X[\bar x-\nabla_{x}P_{\tau_{k}}(\bar x,\bar y,\bar z)\Vert =0,
$$
and this contradicts \eqref{false}.
\end{proof}

\subsection{Inner stopping condition}
\label{sec32}
While we proved the asymptotic convergence properties of Algorithm \ref{alg:DFAM}, we are in fact interested in suitably stopping it in finite time to use it within the sequential penalty scheme of Algorithm \ref{alg:spm}.

We are thus asked to devise a suitable stopping condition ensuring that the output $(x^k,y^k,z^k)$ of Algorithm \ref{alg:DFAM} satisfies conditions \eqref{c11}-\eqref{c3}.

To this aim, we shall exploit the result from Proposition \ref{preliminary_df}. We can indeed set as a termination criterion the combination of the following two properties:
\begin{equation}\label{stop_criterion}
\begin{aligned}
    \max\left\{\max_{i=1,\ldots ,n}\tilde\alpha_{yi}^{\ell+1}, \max_{i=1,\ldots ,n}\tilde\alpha_{zi}^{\ell+1}\right\}
 \le&\frac{\xi_k}{\max\{\tau_k,1\}},\\
 \Vert x^{\ell}-\Pi_X[x^{\ell}-\nabla_{x}P_{\tau_k}(x^{\ell},y^{\ell},z^{\ell})]\|\le& \;\xi_k.
\end{aligned}
\end{equation}
Note that the stopping condition should be checked after step \ref{step:dfs_in_dfam2} of Algorihtm \ref{alg:DFAM} and, when satisfied, the point $(x^\ell,y^\ell,z^\ell)$ should be given as output.

We first discuss the second condition. The gradient of $P_{\tau_k}$ w.r.t.\ $x$ variables is accessible, as it only concerns the quadratic penalty terms that we know in analytical form. Thus, the condition is easily checkable. 
Proposition \ref{dec_properties} implies that the criterion is satisfied in a finite number of iterations.

As for the former condition, clearly it is readily checkable too, and by \ref{dist_0} it is also satisfied in a finite number of inner iterations. From Proposition \ref{preliminary_df} we also get
  \begin{equation*}
    \begin{aligned}
        \Vert \nabla_{y}P_{\tau_k}(x^k,y^k,z^k) \Vert \le (c_{1y}+c_2\tau_k)\frac{\xi_k}{\max\{\tau_k,1\}},\\
    \Vert \nabla_{z}P_{\tau_k}(x^k,y^k,z^k) \Vert \le (c_{1z}+c_2\tau_k)\frac{\xi_k}{\max\{\tau_k,1\}}.
    \end{aligned}
    \end{equation*}
    If $\tau_k\ge 1$, we then get
\begin{equation*}
    \begin{aligned}
        \Vert \nabla_{y}P_{\tau_k}(x^k,y^k,z^k) \Vert \le c_{1y}\frac{\xi_k}{\tau_k}+c_2\xi_k\le (c_{1y}+c_2)\xi_k,\\
    \Vert \nabla_{z}P_{\tau_k}(x^k,y^k,z^k) \Vert \le c_{1z}\frac{\xi_k}{\tau_k}+c_2\xi_k\le (c_{1z}+c_2)\xi_k,
    \end{aligned}
    \end{equation*}
    whereas if $\tau_k<1$ we have
    \begin{equation*}
    \begin{aligned}
        \Vert \nabla_{y}P_{\tau_k}(x^k,y^k,z^k) \Vert \le c_{1y}{\xi_k}+c_2\tau_k\xi_k\le (c_{1y}+c_2)\xi_k,\\
    \Vert \nabla_{z}P_{\tau_k}(x^k,y^k,z^k) \Vert \le c_{1z}{\xi_k}+c_2\tau_k\xi_k\le (c_{1z}+c_2)\xi_k,
    \end{aligned}
    \end{equation*}
    
Thus, conditions \eqref{c11}-\eqref{c3} will be satisfied at each iteration $k$ and we may conclude that the convergence result of Proposition \ref{prop_main} holds.

\section{The case of partially separable functions}
\label{sec4}
The approach resulting from the combination of Sections \ref{sec2} and \ref{sec3} could in principle be employed to solve any problem of the form \eqref{prob_main} without using the derivatives of $f$.
However, as mentioned in the introduction of this manuscript, one of the computational motivations of the proposed strategy has to be found in the particular case of functions $f_j$ being coordinate partially separable. Formally, the problem has the form
\begin{equation} \label{eq:sep_problem}
    \begin{aligned}
        \min_{x\in X}\sum_{j=1}^m f_j(x_{S_j}),
    \end{aligned}
\end{equation}
where $S_j\subseteq\{1,\ldots,n\}$ for all $j=1,\ldots,m$.

In this case, the reformulated problem for the penalty decomposition scheme is
\begin{equation*}
    \begin{aligned}
        \min_{\substack{
            x\in X,\\y_j\in\mathbb{R}^{|S_j|}
        }}\;&\sum_{j=1}^m f_j(y_j)\\
        \text{s.t. }&x_{S_j}-y_j = 0\quad \text{for all }j=1,\ldots,m.
    \end{aligned}
\end{equation*}
In this scenario, we have some advantages:
\begin{itemize}
    \item Each duplicate vector of variables is only made of few components.
    \item As a consequence, the size of equality constraints to be penalized is heavily reduced.
    \item Most importantly, in Algorithm \ref{alg:DFAM}, each DF-Search only concerns few variables, massively reducing the cost of derivative-free optimization steps; indeed, line-search base derivative-free approaches are know to scale poorly as the number of variables grow. The cost of an iteration of Algorithm \ref{alg:DFAM} is now $\sum_{j=1}^{m} 2|S_j|$ evaluations of individual functions $f_j$, which is equivalent to the cost of carrying out an iteration of the DF-Search procedure directly on the original problem;
    \item In addition, the subproblems with respect to each block of variables $y_j$ are completely independent of each other; thus, optimization can naturally take advantage of parallel computation.
\end{itemize}
Note that the original problem should be harder to solve by (derivative-free) alternate minimization as functions $f_j$ are not separable. The variables of the overall problem are thus connected to each other by more complex functions than the penalty terms appearing in the subproblems.

\section{Computational experiments}
\label{sec5}
We report in this section an experimental evaluation of the proposed penalty decomposition derivative free (\texttt{PDDF}) method. We consider a set of problems of the form \eqref{eq:sep_problem}, where the functions $f_j$ are partially separable, by extending some test problems from \texttt{CUTEst} \cite{Gould2015}.
Similarly to what is done in \cite{PorcelliToint2022Exploiting}, we obtain the test problems by taking the summation of several copies of base functions, where a subset of variables is shared between functions, following different patterns. In \texttt{ARWHEAD} a variable is common to all the functions in the summation. In \texttt{BROWNAL6}, \texttt{BROYDN3D}, \texttt{ENGVAL}, \texttt{FREUROTH}, \texttt{MOREBV} and \texttt{TRIDIA} variables are shared between consecutive functions, whereas in \texttt{DIXMAANA}, \texttt{DIXMAANI}, \texttt{NZF1} and \texttt{WOODS} variables are shared following a more intricate rule. In \texttt{BDQRTIC} we find both variables shared between consecutive functions and a variable that is shared among them all. In \texttt{BEALES}, \texttt{POWSING} and \texttt{ROSENBR} instead, no variable is shared between the functions, thus each term of the summation can be optimized independently. The other problems listed in Table \ref{tab:prob_recap} are constructed by taking $m$ copies of each base \texttt{CUTEst} problem and linking them sequentially, so that the last variable of one copy is shared with the first variable of the next one. Additional problems have been generated considering the summation of different \texttt{CUTEst} functions and sharing the first variables between those. The details of the problems constructed in this latter way are available in Table \ref{tab:prob_recap_2}. The base \texttt{CUTEst} problems that we used in this work comprise academic, modeling and real-world problems, both unconstrained and with box constraints. We consider different dimensionalities $n$ for the problems, ranging from 4 to 1000. In total we have 231 unconstrained problems and 44 constrained ones. 

\begin{table*}[t]
	\centering
	\scriptsize
	\caption{Details of the problems employed in the experimental analysis. The problems are obtained taking the summation of several copies of the base \texttt{CUTEst} problem reported and sharing a subset of variables between functions.The problem name also reports whether the problem is unconstrained [\texttt{UC}] or box-constrained [\texttt{B}].}
	\label{tab:prob_recap}
	\renewcommand{\arraystretch}{1.4}%
    \resizebox{\linewidth}{!}{
	\begin{tabular}{|l||c|c|c@{\hspace{0.1cm}}|l||c|c|}%
		\cline{1-3}\cline{5-7}
		\multirow{2}{*}{\textbf{Problem}} & \multirow{2}{*}{$\mathbf{n}$} & \multirow{2}{*}{$\mathbf{m}$} & & \multirow{2}{*}{\textbf{Problem}} & \multirow{2}{*}{$\mathbf{n}$} & \multirow{2}{*}{$\mathbf{m}$}\\[2mm]
		\cline{1-3}\cline{5-7} \cline{1-3}\cline{5-7}
\texttt{AKIVA [UC]} & 5, 9, 17, 33 & 4, 8, 16, 32  &  & \texttt{HATFLDGLS [UC]} & 97, 193, 385, 769 & 4, 8, 16, 32  \\ \cline{1-3}\cline{5-7}
\texttt{ARWHEAD [UC]} & 10, 50, 100, 500, 1000 & 9, 49, 99, 499, 999  &  & \texttt{HILBERTA [UC]} & 5, 9, 17, 33 & 4, 8, 16, 32  \\ \cline{1-3}\cline{5-7}
\texttt{BA-L1SPLS [UC]} & 225, 449, 897 & 4, 8, 16  &  & \texttt{HILBERTB [UC]} & 37, 73, 145, 289 & 4, 8, 16, 32  \\ \cline{1-3}\cline{5-7}
\texttt{BARD [UC]} & 9, 17, 33, 65 & 4, 8, 16, 32  &  & \texttt{HIMMELBCLS [UC]} & 5, 9, 17, 33 & 4, 8, 16, 32  \\ \cline{1-3}\cline{5-7}
\texttt{BDQRTIC [UC]} & 10, 50, 100, 500, 1000 & 6, 46, 96, 496, 996  &  & \texttt{HIMMELBG [UC]} & 5, 9, 17, 33 & 4, 8, 16, 32  \\ \cline{1-3}\cline{5-7}
\texttt{BEALE [UC]} & 5, 9, 17, 33 & 4, 8, 16, 32  &  & \texttt{HS110 [B]} & 37, 73, 145, 289 & 4, 8, 16, 32  \\ \cline{1-3}\cline{5-7}
\texttt{BEALES [UC]} & 10, 50, 100, 500, 1000 & 5, 25, 50, 250, 500  &  & \texttt{LRIJCNN1 [UC]} & 85, 169, 337, 673 & 4, 8, 16, 32  \\ \cline{1-3}\cline{5-7}
\texttt{BIGGS6 [UC]} & 21, 41, 81, 161 & 4, 8, 16, 32  &  & \texttt{LUKSAN11LS [UC]} & 397, 793 & 4, 8  \\ \cline{1-3}\cline{5-7}
\texttt{BOXBODLS [UC]} & 5, 9, 17, 33 & 4, 8, 16, 32  &  & \texttt{LUKSAN12LS [UC]} & 389, 777 & 4, 8  \\ \cline{1-3}\cline{5-7}
\texttt{BRKMCC [UC]} & 5, 9, 17, 33 & 4, 8, 16, 32  &  & \texttt{LUKSAN13LS [UC]} & 389, 777 & 4, 8  \\ \cline{1-3}\cline{5-7}
\texttt{BROWNAL6 [UC]} & 10, 50, 102, 502 & 2, 12, 25, 125  &  & \texttt{LUKSAN14LS [UC]} & 389, 777 & 4, 8  \\ \cline{1-3}\cline{5-7}
\texttt{BROYDN3D [UC]} & 10, 50, 100, 500, 1000 & 10, 50, 100, 500, 1000  &  & \texttt{LUKSAN21LS [UC]} & 397, 793 & 4, 8  \\ \cline{1-3}\cline{5-7}
\texttt{CHNROSNB [UC]} & 197, 393, 785 & 4, 8, 16  &  & \texttt{MOREBV [UC]} & 12, 52, 102, 502 & 12, 52, 102, 502  \\ \cline{1-3}\cline{5-7}
\texttt{CHNRSNBM [UC]} & 197, 393, 785 & 4, 8, 16  &  & \texttt{NZF1 [UC]} & 13, 39, 130, 650 & 5, 17, 59, 299  \\ \cline{1-3}\cline{5-7}
\texttt{CLUSTERLS [UC]} & 5, 9, 17, 33 & 4, 8, 16, 32  &  & \texttt{POWERSUM [UC]} & 13, 25, 49, 97 & 4, 8, 16, 32  \\ \cline{1-3}\cline{5-7}
\texttt{COOLHANSLS [UC]} & 33, 65, 129, 257 & 4, 8, 16, 32  &  & \texttt{POWERSUMB [B]} & 13, 25, 49, 97 & 4, 8, 16, 32  \\ \cline{1-3}\cline{5-7}
\texttt{DENSCHNA [UC]} & 5, 9, 17, 33 & 4, 8, 16, 32  &  & \texttt{POWSING [UC]} & 20, 52, 100, 500 & 5, 13, 25, 125  \\ \cline{1-3}\cline{5-7}
\texttt{DENSCHNB [UC]} & 5, 9, 17, 33 & 4, 8, 16, 32  &  & \texttt{QING [UC]} & 397, 793 & 4, 8  \\ \cline{1-3}\cline{5-7}
\texttt{DENSCHND [UC]} & 9, 17, 33, 65 & 4, 8, 16, 32  &  & \texttt{QINGB [B]} & 17, 33, 65, 129 & 4, 8, 16, 32  \\ \cline{1-3}\cline{5-7}
\texttt{DEVGLA1 [UC]} & 13, 25, 49, 97 & 4, 8, 16, 32  &  & \texttt{ROSENBR [UC]} & 10, 50, 100, 500, 1000 & 5, 25, 50, 250, 500  \\ \cline{1-3}\cline{5-7}
\texttt{DEVGLA1B [B]} & 13, 25, 49, 97 & 4, 8, 16, 32  &  & \texttt{SNAIL [UC]} & 5, 9, 17, 33 & 4, 8, 16, 32  \\ \cline{1-3}\cline{5-7}
\texttt{DGOSPEC [B]} & 9, 17, 33, 65 & 4, 8, 16, 32  &  & \texttt{SSI [UC]} & 9, 17, 33, 65 & 4, 8, 16, 32  \\ \cline{1-3}\cline{5-7}
\texttt{DIAGIQB [B]} & 37, 73, 145, 289 & 4, 8, 16, 32  &  & \texttt{TESTQUAD [UC]} & 37, 73, 145, 289 & 4, 8, 16, 32  \\ \cline{1-3}\cline{5-7}
\texttt{DIXMAANA [UC]} & 15, 51, 102, 501 & 15, 51, 102, 501  &  & \texttt{TOINTGOR [UC]} & 197, 393, 785 & 4, 8, 16  \\ \cline{1-3}\cline{5-7}
\texttt{DIXMAANI [UC]} & 15, 51, 102, 501 & 15, 51, 102, 501  &  & \texttt{TOINTPSP [UC]} & 197, 393, 785 & 4, 8, 16  \\ \cline{1-3}\cline{5-7}
\texttt{ENGVAL [UC]} & 10, 50, 100, 500, 1000 & 9, 49, 99, 499, 999  &  & \texttt{TOINTQOR [UC]} & 197, 393, 785 & 4, 8, 16  \\ \cline{1-3}\cline{5-7}
\texttt{ERRINROS [UC]} & 197, 393, 785 & 4, 8, 16  &  & \texttt{TRIDIA [UC]} & 10, 50, 100, 500, 1000 & 10, 50, 100, 500, 1000  \\ \cline{1-3}\cline{5-7}
\texttt{ERRINRSM [UC]} & 197, 393, 785 & 4, 8, 16  &  & \texttt{TRIGON1 [UC]} & 37, 73, 145, 289 & 4, 8, 16, 32  \\ \cline{1-3}\cline{5-7}
\texttt{EXPFIT [UC]} & 5, 9, 17, 33 & 4, 8, 16, 32  &  & \texttt{TRIGON1B [B]} & 37, 73, 145, 289 & 4, 8, 16, 32  \\ \cline{1-3}\cline{5-7}
\texttt{FREUROTH [UC]} & 10, 50, 100, 500, 1000 & 9, 49, 99, 499, 999  &  & \texttt{VANDANMSLS [UC]} & 85, 169, 337, 673 & 4, 8, 16, 32  \\ \cline{1-3}\cline{5-7}
\texttt{HATFLDA [B]} & 13, 25, 49, 97 & 4, 8, 16, 32  &  & \texttt{WATSON [UC]} & 45, 89, 177, 353 & 4, 8, 16, 32  \\ \cline{1-3}\cline{5-7}
\texttt{HATFLDB [B]} & 13, 25, 49, 97 & 4, 8, 16, 32  &  & \texttt{WOODS [UC]} & 20, 40, 200, 400 & 30, 60, 300, 600  \\ \cline{1-3}\cline{5-7}
\texttt{HATFLDFLS [UC]} & 9, 17, 33, 65 & 4, 8, 16, 32  &  &  &  &  \\ \cline{1-3}\cline{5-7}
	\end{tabular}
}
\end{table*}

\begin{table*}[t]
	\centering
	\scriptsize
	\caption{Details of the problems employed in the experimental analysis. The problems are obtained taking the summation of different \texttt{CUTEst} problems and sharing the first $s$ variables between functions. The problem name also reports whether the problem is unconstrained [\texttt{UC}] or box-constrained [\texttt{B}].}
	\label{tab:prob_recap_2}
	\renewcommand{\arraystretch}{1.4}%
    \resizebox{0.65\linewidth}{!}{
    \begin{tabular}{|p{10cm}||c|c|c|c|}%
		\hline
		\multirow{2}{*}{\textbf{Problem}} & \multirow{2}{*}{$\mathbf{n}$} & \multirow{2}{*}{$\mathbf{m}$} &  \multirow{2}{*}{$\mathbf{s}$} \\[2mm]
		\hline \hline
\texttt{3PK, CHEBYQAD, DECONVB, SANTALS [B]} & 178 & 4  & 8  \\ \hline
\texttt{AKIVA, BEALE, BOXBODLS, BRKMCC, BROWNBS [UC]} & 6 & 5  & 1  \\ \hline
\texttt{ALLINITU, BROWNDEN, BIGGS6, FBRAIN3LS, STREG [UC]} & 16 & 5  & 2  \\ \hline
\texttt{BA-L1LS, BA-L1SPLS [UC]} & 106 & 2  & 8  \\ \hline
\texttt{BARD, BOX3, CLIFF, CLUSTERLS, CUBE, DJTL, ENGVAL2, EXPFIT, GAUSSIAN, GROWTHLS, HAIRY [UC]} & 17 & 11  & 1  \\ \hline
\texttt{BQPGABIM, BQPGASIM [B]} & 88 & 2  & 8  \\ \hline
\texttt{CHNROSNB, CHNRSNBM [UC]} & 92 & 2  & 8  \\ \hline
\texttt{CHWIRUT1LS, CHWIRUT2LS [UC]} & 4 & 2  & 2  \\ \hline
\texttt{DENSCHNA, DENSCHNB, DENSCHNC, DENSCHND, DENSCHNE, DENSCHNF [UC]} & 9 & 6  & 1  \\ \hline
\texttt{DEVGLA1B, DEVGLA2B [B]} & 7 & 2  & 2  \\ \hline
\texttt{DIAMON2DLS, DIAMON3DLS [UC]} & 157 & 2  & 8  \\ \hline
\texttt{FBRAIN2LS, FBRAINLS [B]} & 4 & 2  & 2  \\ \hline
\texttt{GAUSS1LS, GAUSS2LS, GAUSS3LS [UC]} & 20 & 3  & 2  \\ \hline
\texttt{HATFLDA, HATFLDB, HATFLDC [B]} & 25 & 3  & 4  \\ \hline
\texttt{HEART6LS, HEART8LS [UC]} & 12 & 2  & 2  \\ \hline
\texttt{HS1, HS110, HS2, HS25, HS3, HS38, HS3MOD, HS4, HS5 [B]} & 21 & 9  & 1  \\ \hline
\texttt{HYDC20LS, HYDCAR6LS [UC]} & 112 & 2  & 16  \\ \hline
\texttt{LANCZOS1LS, LANCZOS2LS, LANCZOS3LS [UC]} & 14 & 3  & 2  \\ \hline
\texttt{LUKSAN11LS, LUKSAN12LS, LUKSAN13LS, LUKSAN14LS, LUKSAN15LS, LUKSAN16LS, LUKSAN17LS, LUKSAN21LS, LUKSAN22LS [UC]} & 766 & 9  & 16  \\ \hline
\texttt{MANCINO, QING, SENSORS [UC]} & 236 & 3  & 32  \\ \hline
\texttt{MGH09LS, MGH10LS, MGH10SLS, MGH17LS, MGH17SLS, NELSONLS, OSBORNEA, OSBORNEB [UC]} & 25 & 8  & 2  \\ \hline
\texttt{MISRA1ALS, MISRA1BLS, MISRA1CLS, MISRA1DLS [UC]} & 5 & 4  & 1  \\ \hline
\texttt{PFIT1LS, PFIT2LS, PFIT3LS, PFIT4LS [B]} & 6 & 4  & 2  \\ \hline
\texttt{TOINTGOR, TOINTPSP, TOINTQOR [UC]} & 134 & 3  & 8  \\ \hline
\texttt{TRIGON1, TRIGON2 [UC]} & 18 & 2  & 2  \\ \hline
\texttt{TRIGON1B, TRIGON2B [B]} & 16 & 2  & 4  \\ \hline
\texttt{VESUVIALS, VESUVIOLS, VESUVIOULS [UC]} & 20 & 3  & 2  \\ \hline
	\end{tabular}
    }
\end{table*}

We also consider one problem outside of \texttt{CUTEst} collection, that is the Lockwood problem \cite{gramacy2020surrogates}. In its original formulation,  we aim at minimizing a linear objective with two nonlinear constraints and box constraints, and is formulated as
\begin{equation} \label{eq:lockwood_orig}
\min_{x} \left\{f(x) = \sum_{j=1}^6 x_j : c_1(x) \leq 0, c_2(x) \leq 0, x \in [0, 2 \cdot 10^4]^6\right\}.
\end{equation}
Since the two nonlinear constraints are black boxes, we are not able to use \texttt{PDDF} to solve the Lockwood problem directly. We therefore consider two alternative versions of this problem, that consider the two nonlinear constraints as part of the objective, i.e., in the form of penalties. We will elaborate on this later.

The performance of the proposed \texttt{PDDF} method is evaluated in comparison to the classical line-search based derivative-free coordinate descent method (\texttt{LS}) proposed in \cite{Lucidi2002Derivative}. In \texttt{LS} the objective function sampling is carried out along coordinate directions, making extrapolation steps when a sufficient decrease of the objective function is obtained for the tentative stepsize, ensuring at every step that the solution remains feasible by using a maximal step size. We refer the reader to \cite{Lucidi2002Derivative} for further details. In addition, we compare our algorithm with the NOMAD implementation \cite{audet2022algorithm} of the Mesh Adaptive Direct Search algorithm, a widely used derivative-free optimization method that explores the search space using a set of mesh points whose resolution is adaptively refined to efficiently locate solutions.

The inner termination criterion for \texttt{PDDF} consists in the conditions \eqref{stop_criterion} with $\xi_k = 10^{-2}$ for all $k$. The outer loop is then stopped when either $\|x^{k+1}-x^k\|\le 10^{-2}$ or $k=100$. The \texttt{LS} method is then run starting from the obtained solution for a refinement process. For \texttt{LS}, used either standalone or as a refiner following \texttt{PDDF}  we employ $\max_{i=1, \dots, n} \alpha_i \leq 10^{-4}$ as stopping condition. Similarly, \texttt{MADS} terminates when the mesh size is below $10^{-4}$. In case one algorithm does not meet the termination condition within 10 minutes of wall-clock time we terminate the experiment. The initial stepsize is set to $1$ for both \texttt{PDDF} and \texttt{LS}, while MADS uses the default initial mesh size. As regards the line-search, both \texttt{PDDF} and \texttt{LS} use $\gamma = 10^{-6}$ and $\theta=0.5$. For \texttt{PDDF} we set $\tau_0 = 1$ and $\tau_{k+1} = \min\{1.1 \tau_{k}, 10^{8}\}$. When considering constrained problems, $\texttt{PDDF}$ handles the box explicitly in the line-search procedure, as \texttt{LS} does. The computational experiments here reported were implemented in \texttt{Python3} and executed on a computer running Ubuntu 24.04 with an Intel Core i5-13400F 10 cores 4.60 GHz processor and 32 GB RAM. 
The implementation of the proposed \texttt{PDDF} method is available at \href{https://github.com/dadoPuccio/PD_derivative_free}{\texttt{github.com/dadoPuccio/PD\_derivative\_free}}.

In order to offer a fair comparison between the three approaches, we count the number of function evaluations of each function $f_j$ that composes the objective. By assuming that the computational burden for evaluating each $f_j$ is similar, the total of individual sub-function evaluations provides an unbiased metric to compare \texttt{PDDF}, \texttt{LS} and \texttt{MADS}.  As a tool for comparing the behavior of the algorithm considered we employ both data profiles \cite{more2009dataprofile} and performance profiles \cite{Dolan2002}. Data profiles report the percentage of solved problems as the computational budget increases, while, given a metric, performance profiles report the fraction of fraction of problems solved within a given performance ratio with respect to the best solver. We account a problem as solved by the solver $S$ in case $f(x_0)-f(x_S) \geq (1-\epsilon)(f(x_0)-f(x_{\text{best}}))$, where $f(x_0)$ is the objective at the starting point, $x_S$ is the solution found by the solver $S$ and $x_{\text{best}}$ is the best solution found by all optimizers. In data profiles we normalize the number of function evaluations in problems with different dimension $n$ and different number of functions $m$ by considering groups of $m \times (n+1)$ sub-function evaluations. All problems are initialized to the \texttt{CUTEst} default starting solution for all solvers.

\begin{figure}[ht!]
	\subfloat[Data profile in the unconstrained problems with $\epsilon=0.01$]{\includegraphics[width=0.45\textwidth]{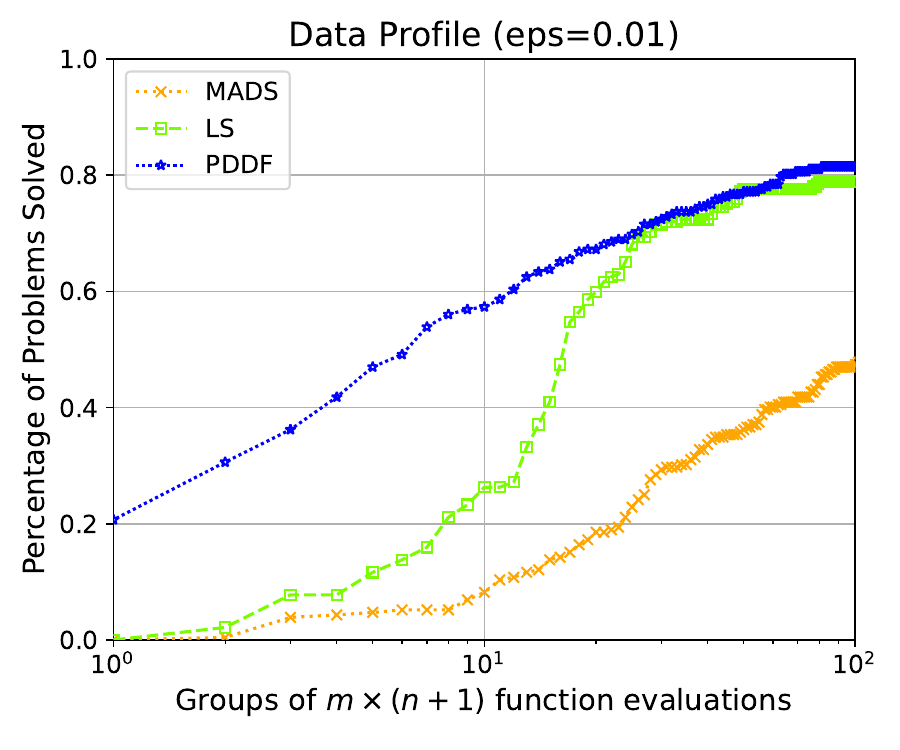}}
    \hfil
	\subfloat[Data profile in the box-constrained problems with $\epsilon=0.01$]{\includegraphics[width=0.45\textwidth]{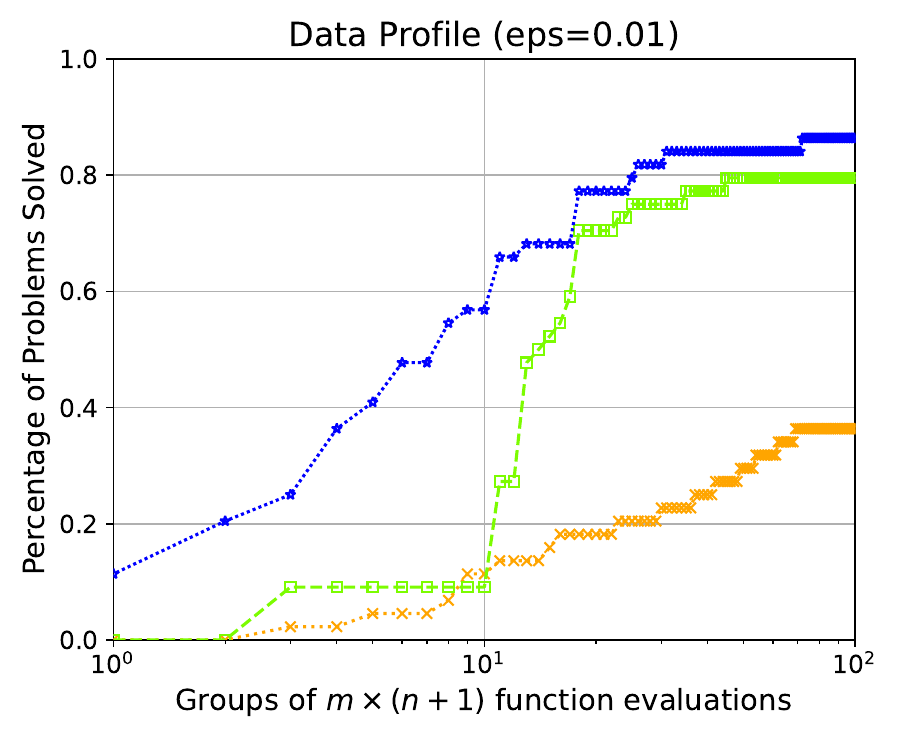}}
    \\[3mm]
    \subfloat[Data profile in the unconstrained problems with $\epsilon=10^{-4}$]{\includegraphics[width=0.45\textwidth]{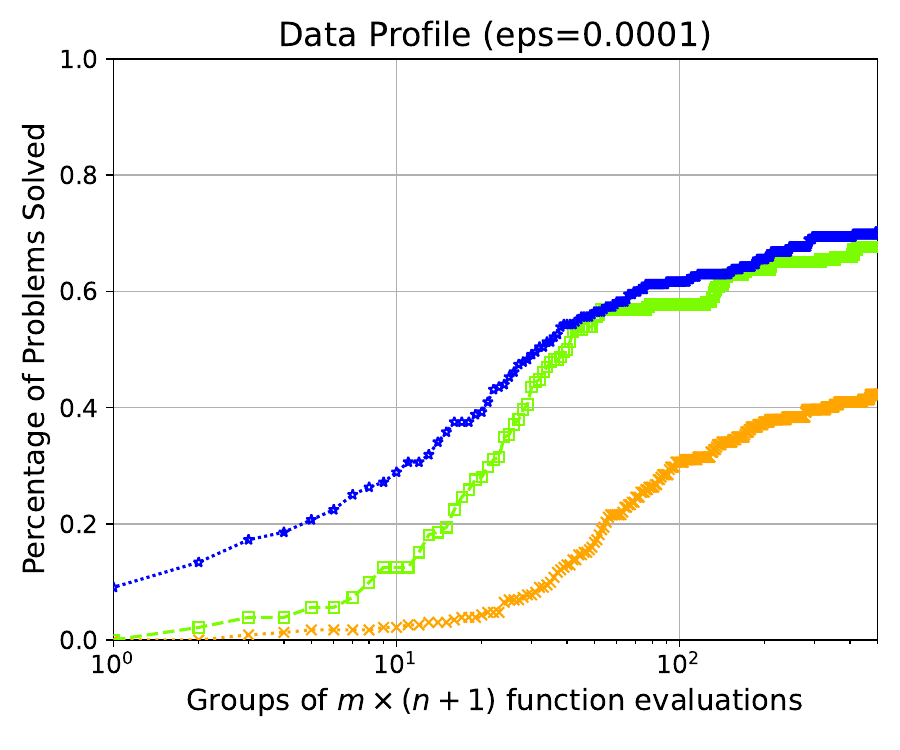}}
    \hfil
	\subfloat[Data profile in the box-constrained problems with $\epsilon=10^{-4}$]{\includegraphics[width=0.45\textwidth]{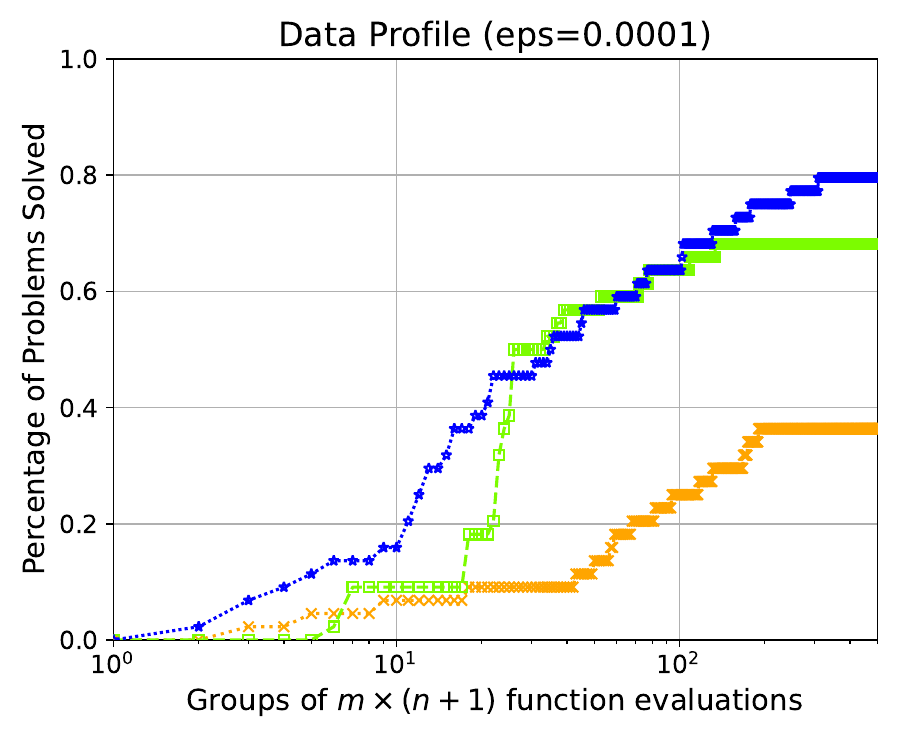}}
    \\[3mm]
    \subfloat[Performance profile of the number of sub-function evaluations in the unconstrained problems]{\includegraphics[width=0.45\textwidth]{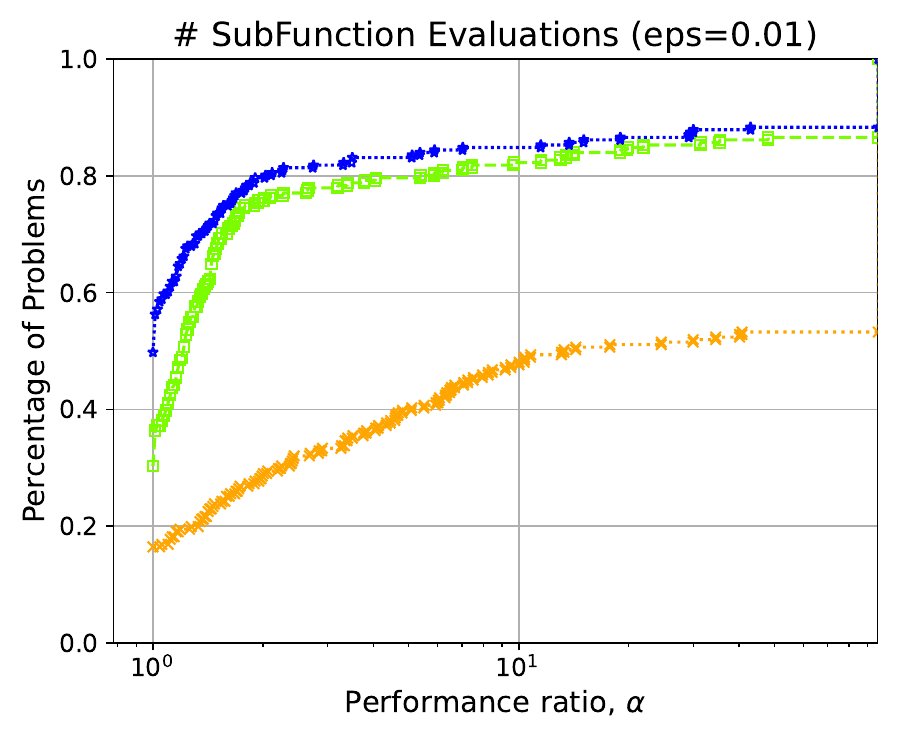}}
    \hfil
	\subfloat[Performance profile of the number of sub-function evaluations in the box-constrained problems]{\includegraphics[width=0.45\textwidth]{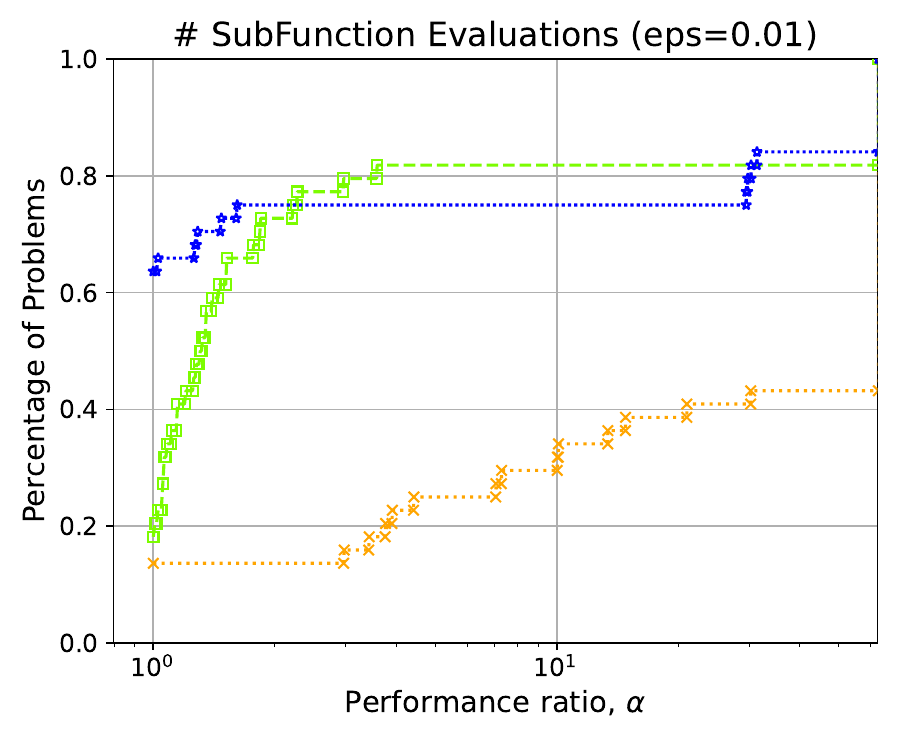}}
	\caption{Data profiles and performance profiles in the problems derived from \texttt{CUTEst}.}
	\label{fig:profiles}
\end{figure}

In Figure \ref{fig:profiles} we report the data profiles and performance profiles in the set of problems derived from \texttt{CUTEst}. We consider unconstrained and bound-constrained problems separately in the profiles. In both cases we can observe that \texttt{PDDF} is often capable of obtaining good quality solutions with fewer function evaluations, especially if the required precision is not extremely high (as often happens in black-box settings). On the contrary \texttt{MADS} is far behind the others, probably due to the large problem dimensionalities considered in our experimental setting. 

\begin{figure}[t]
    \subfloat[Time performance profile in the unconstrained problems]{\includegraphics[width=0.45\textwidth]{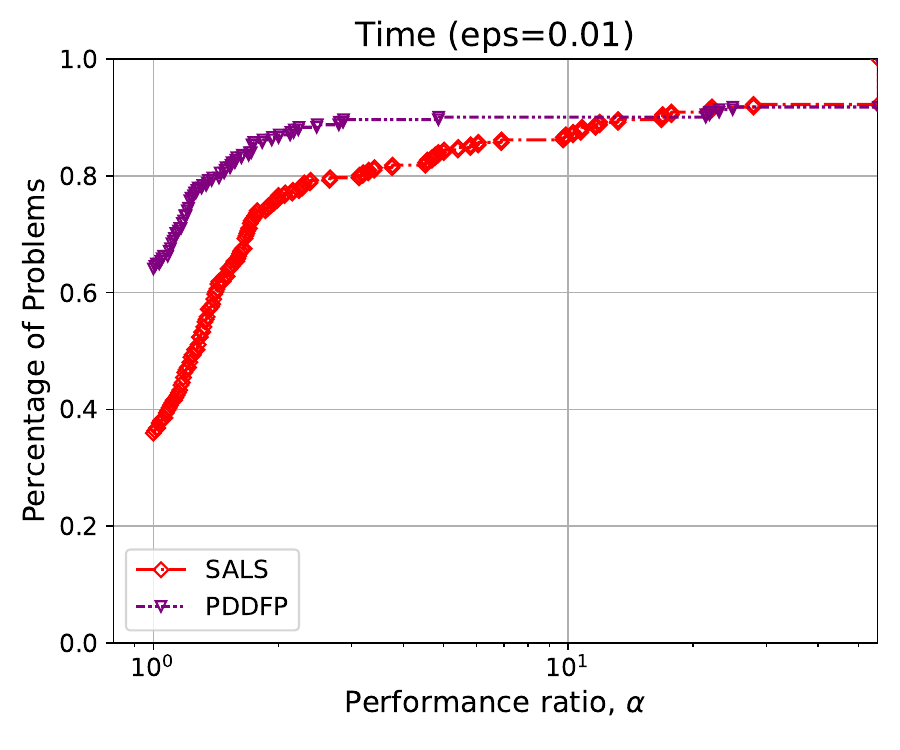}}
    \hfil
	\subfloat[Time performance profile in the box-constrained problems]{\includegraphics[width=0.45\textwidth]{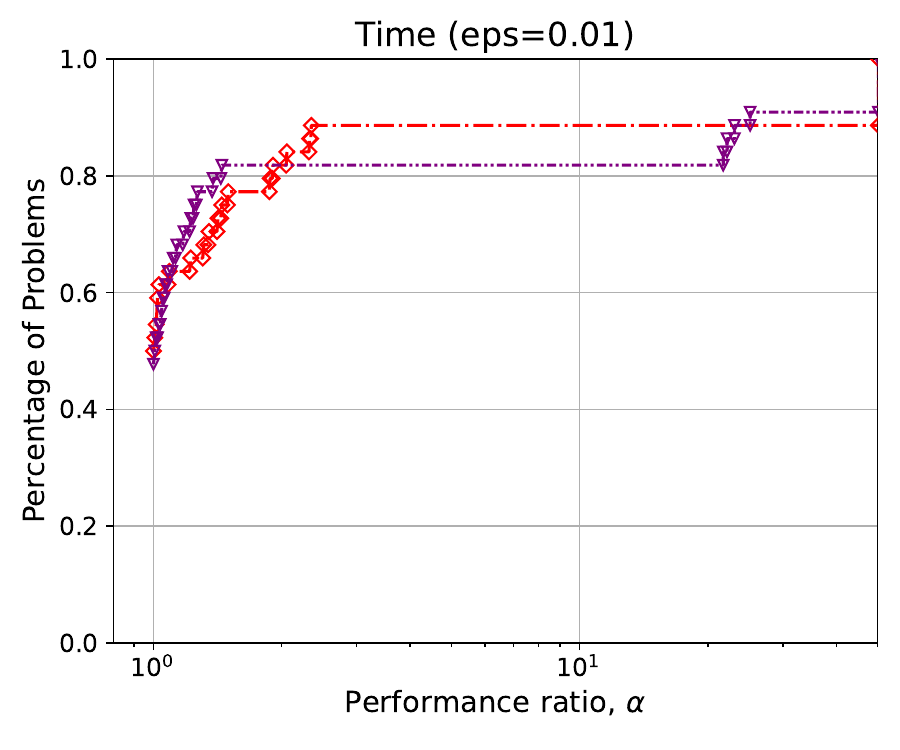}}
	\caption{Performance profile of the wall-clock time require do reach the termination condition in the problems derived from \texttt{CUTEst}.}
	\label{fig:time_profile}
\end{figure}

We provide now the reader with some results concerning the wall-clock time that \texttt{PDDF} method employs to converge. Given the structure of the subproblems obtained with the penalty decomposition scheme, each duplicate vector of variables can be optimized independently of each other: therefore, a parallel implementation of the penalty decomposition derivative free method (\texttt{PDDFP}) can be fruitfully employed. This is particularly appealing for real-world black-box problems, where objective function evaluations are typically expensive to compute; hence, being able of evaluating multiple functions $f_j$ in parallel allows for considerable improvements to the overall computational time. The scaling properties of \texttt{PDDFP} are tightly linked to the number of sub-functions that compose an objective function. Given duplication of variables for each sub-function, adding a processor will be beneficial for \texttt{PDDFP} only in case the total number of computing units stays below the number of sub-functions.

The \texttt{LS} method, on the other hand, cannot exploit parallel computation; for a more challenging test, \texttt{PDDFP} is thus compared with a structure-aware implementation of the classical line-search based derivative-free coordinate descent method (\texttt{SALS}): by keeping track of the values of the individual functions $f_j$, it is possible to evaluate only a subset of the functions when a new point is considered, i.e., those impacted by changes to the currently considered variable. 

As regards the parallel implementation, we limit the maximum number of workers to 12. We simulate the high cost of function evaluations by introducing a 1ms pause at each evaluation. In \ref{fig:time_profile} we report the outcome of this analysis in form of performance profiles. We observe that \texttt{PDDFP} is superior to \texttt{SALS} especially in unconstrained problems. For bound-constrained problems, the gap between the two algorithms is smaller because there are fewer cases with a large number of sub-functions $m$, which limits \texttt{PDDFP}'s ability to fully exploit the available parallel workers.

In the final part for our computational analysis, we compare the behavior of the considered algorithms in two different modification of the Lockwood problem \cite{gramacy2020surrogates}. As earlier discussed in this Section, the original formulation of the problem cannot be handled by \texttt{PDDF} because of the structure of its constraints. We devise and adapted variant of \eqref{eq:lockwood_orig} where the constraints are moved to the objective function in the form of static penalties, leading to 
\begin{equation} \label{eq:lockwood1}
    \min_{x} \left\{f(x) = \sum_{j=1}^6 x_j +  \max\{c_1(x),0\} + \max\{c_2(x), 0\} : x \in [0, 2 \cdot 10^4]^6\right\}, \tag{L1}
\end{equation}
leading to a 6-dimensional problem with box-constraints and an objective function made up of three terms. The resulting problem does fit in our framework but the three sub-function that compose the objective are not partially separable. We therefore consider a partially separable version of the problem \eqref{eq:lockwood1} by considering four copies of its objective function with different (random) weights assigned to the three terms, and sharing the last variable of one copy as the first variable of the next one. The resulting problem, that we indicate with (L2), is a 21-dimensional box-constrained problem with a partially separable objective function mad up of four terms.

\begin{figure}[t]
    \subfloat{\includegraphics[width=0.45\textwidth]{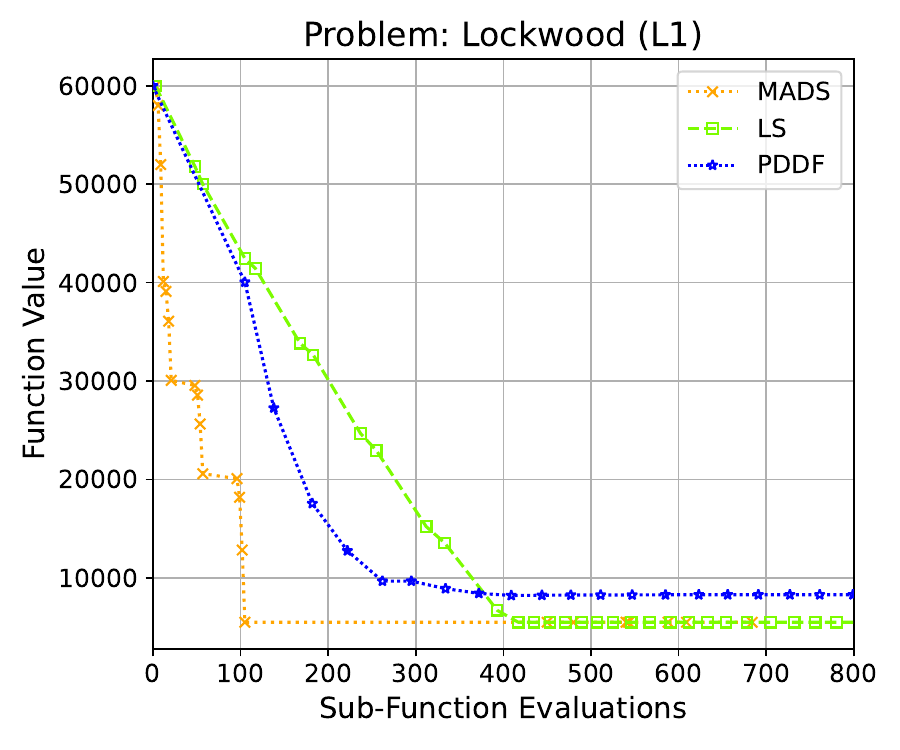}}
    \hfil
	\subfloat{\includegraphics[width=0.45\textwidth]{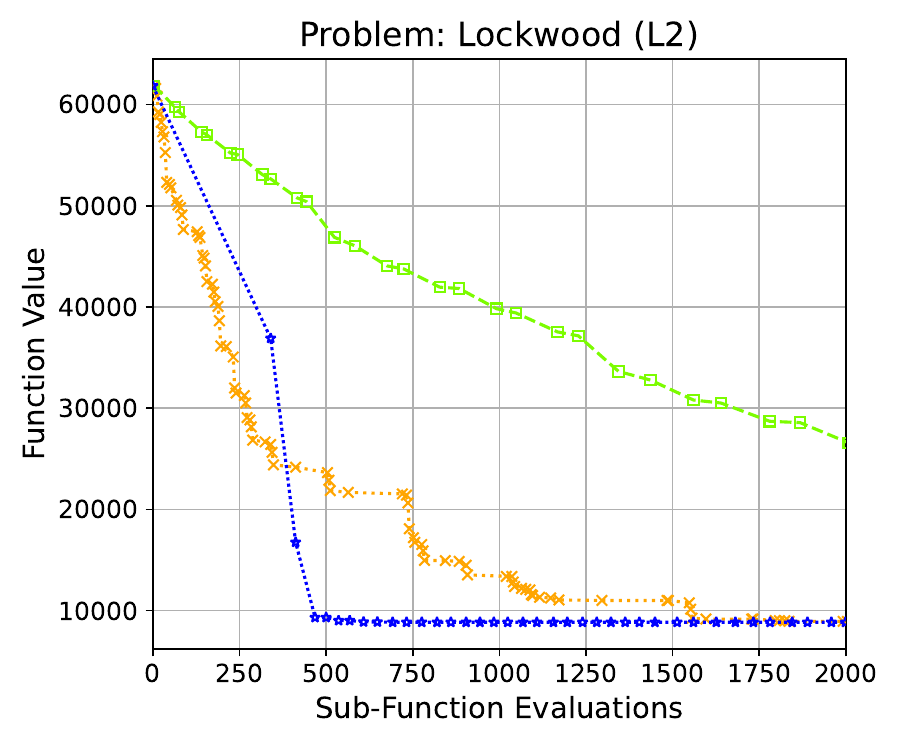}}
	\caption{Evolution of the objective function value with respect to the total number of sub-function evaluations for the compared optimization algorithms in the two versions of the Lockwood problem considered.}
	\label{fig:lockwood}
\end{figure}

In this case the termination condition of \texttt{PDDF} is $\xi_k = 10^{-1}$ and $\|x^{k+1} -x^k\| \leq 10^{-1}$, followed by \texttt{LS} to further refine the solution. 
For \texttt{LS} we employ $\max_{i=1, \dots, n} \alpha_i \leq 10^{-2}$ as the termination criteria, while \texttt{MADS} terminates when the mesh size is below $10^{-2}$.
In Figure \ref{fig:lockwood} we report the evolution of the objective function during the optimization when considering \texttt{PDDF}, \texttt{LS} and \texttt{MADS}. In the case of (L1) the penalty decomposition scheme of \texttt{PDDF} is disfavored by the not-partially separable nature of the problem, therefore leading to the worst performing algorithm in this case. On the other hand in problem (L2), \texttt{PDDF} is by far the most effective as it is capable of converging with fewer sub-function evaluations, proving once more that when the function has a partially separable structure the proposed algorithm is particularly effective.

\section{Conclusions}
\label{sec6}
In this paper, we described and formally analyzed a penalty decomposition scheme for derivative-free optimization of coordinate partially separable finite-sum problems. The approach is proved to be well defined and possess global convergence guarantees under very mild standard assumptions. The results of the computational experiments indicate that the proposed penalty decomposition algorithm is capable of outperforming classical coordinate search methods and more elaborate mesh adaptive approaches in large scale structured problems. Moreover, the approach appears to be well-suited for parallelization; in scenarios where multi-processing is possible and the cost of computation is dominated by the evaluation of the black-box function, the proposed framework is shown to be superior also to a structure-informed smart implementation of the line-search based coordinate descent method. A deeper study in this setting might thus be of interest in future research.

The nice feedback obtained by our experiments opens up to interesting paths that could be explored in future research. First, working independently with each term in the objective should make the entire process less vulnerable to the presence of noise in a subset of the objectives. 
Moreover, the separability with respect to the sum terms is particularly nice in mixed setting where derivatives are available for some of the objectives \cite{liuzzi2012decomposition}; indeed, gradient based optimization steps could be carried out w.r.t.\ blocks of variables associated with white-box terms; this would not be possible directly tackling the original formulation of the problem, where the same variables possibly appear in white and black box terms simultaneously.   
A further strength point of penalty decomposition approaches in general is that they are able to handle additional constraints with a limited overhead (see, e.g., the discussion in \cite{kanzow2023inexact}): indeed, additional constraints can be moved to the penalty function and handled naturally within the sequential penalty scheme.
Finally, the development of analogous penalty schemes based on the augmented Lagrangian function \cite{birgin2014practical}, instead of the simple quadratic penalty, could be considered; in particular, under convexity assumptions for the objective function, ADMM-type \cite{neal2011distributed} methods might be appealing to solve problem \eqref{prob:vincolato}.

\section*{Declarations}

\subsection*{Funding}
No funding was received for conducting this study.

\subsection*{Disclosure statement}

The authors report there are no competing interests to declare.

\subsection*{Code availability statement}
The implementation of the proposed \texttt{PDDF} method and its parallel variant is available at \href{https://github.com/dadoPuccio/PD_derivative_free}{\texttt{github.com/dadoPuccio/PD\_derivative\_free}}.

\subsection*{Notes on contributors}
\textbf{Francesco Cecere} was born in Frosinone in 2000. He received his Bachelor’s degree in Mathematics in 2022 at Sapienza University of Rome. He then received his Master’s degree in Applied Mathematics in 2025 at the same university. He was an exchange student in 2023 at ENS Lyon. 
\\\\
\textbf{Matteo Lapucci} received his bachelor and master's degree in Computer Engineering at the University of Florence in 2015 and 2018 respectively. He then received in 2022 his PhD degree in Smart Computing jointly from the Universities of Florence, Pisa and Siena. He is currently Assistant Professor at the Department of Information Engineering of the University of Florence. His main research interests include theory and algorithms for sparse, multi-objective and large scale nonlinear optimization.
\\\\
\textbf{Davide Pucci} is a PhD student at the Global Optimization Laboratory of the University of Florence. He obtained both his Bachelor's and Master's degrees from the same university, in 2020 and 2023, respectively. His research focuses on optimization methods for machine learning and multi-objective optimization.
\\\\
\textbf{Marco Sciandrone}  is a full professor of Operations Research at Sapienza University of Rome. His research interests include operations research, nonlinear optimization, and machine learning. He has published more than 60 papers on international journals, and is coauthor of the book Introduction to Methods for Nonlinear Optimization, Springer, 2023.

\subsection*{ORCID}

Matteo Lapucci: 0000-0002-2488-5486\\
Davide Pucci: 0000-0003-4424-9185\\
Marco Sciandrone: 0000-0002-5234-3174

\bibliographystyle{tfs}
\bibliography{bibliography}

\end{document}